\definecolor{LightCyan}{rgb}{0.88,1,1}
\definecolor{Gray}{gray}{0.9}
\newtheorem{theorem}{Theorem}
\newtheorem{lemma}[theorem]{Lemma}
\newtheorem{corollary}[theorem]{Corollary}
\newtheorem{proposition}[theorem]{Proposition}
\theoremstyle{definition}
\newtheorem{remark}[theorem]{Remark}
\newtheorem{definition}[theorem]{Definition}
\newcommand{\mdeg}{\rm mdeg}
\newcommand{\Z}{\mathbb{Z}}
\newcommand{\init}{{\rm in_\prec}}
\date{\today}
\title{Automorphisms and the canonical ideal}
\author[A. Kontogeorgis]{Aristides Kontogeorgis}
\address{Department of Mathematics, National and Kapodistrian  University of Athens
Pane\-pist\-imioupolis, 15784 Athens, Greece}
\email{kontogar@math.uoa.gr}
\author[A. Terezakis]{Alexios Terezakis }
\address{Department of Mathematics, National and Kapodistrian University of Athens\\
Panepistimioupolis, 15784 Athens, Greece}
\email{aleksistere@math.uoa.gr}
\author[I. Tsouknidas]{Ioannis Tsouknidas }
\address{Department of Mathematics, National and Kapodistrian University of Athens\\
Panepistimioupolis, 15784 Athens, Greece}
\email{iotsouknidas@math.uoa.gr}
\date \today
\newcommand{\aprod}{\mathop{\operator@font \hbox{\Large$\ast$}}}
\begin{document}

\begin{abstract}
 The  automorphism group of a curve is studied from the viewpoint of the canonical embedding and Petri's theorem. A criterion for identifying the automorphism group as an algebraic subgroup the general linear group is given. Furthermore the action of the automorphism group is extended to an action of the minimal free resolution of the canonical ring of the curve $X$.  
\end{abstract}
\maketitle

\section{Introduction}

Let $X$ be a non-singular complete algebraic curve defined over an algebraically closed field of characteristic $p\geq 0$. If the genus $g$ of the curve $X$ is $g\geq 2$ then the automorphism group $G=\mathrm{Aut}(X)$ of the curve $X$ is finite. The theory of automorphisms of curves is an interesting  object of study, see the surveys \cite{Antoniadis2017}, \cite{MR3916740} and the references therein.

On the other hand the theory of syzygies which originates in the work of Hilbert and Sylvester has attracted a lot of researchers and it seems that a lot of geometric information can be found in the minimal free resolution of the ring of functions of an algebraic curve. For an introduction to this fascinating area we refer to \cite{MR2103875}. 

In this article we aim to put together the theory of syzygies of the canonical embedding and the theory of automorphism of curves. 
For this we assume that $X$ is a non-hyperelliptic, not trigonal and a non-singular quintic of genus $6$ and we also assume  $p\neq 2$. These conditions are needed for Petri's theorem to hold, while the $p\neq 2$ condition is needed to ensure the faithful action of the automorphism group on the space of holomorphic differentials $H^0(X,\Omega_X)$. 

More precisely, 
in section \ref{sec:algebraicEquations} we use Petri's theorem in order to  give a necessary and sufficient condition for an element in $\mathrm{GL}(H^0(X,\Omega_X))$ to act as an automorphism of our curve. In section \ref{sec:syzygies} we show that  the automorphism group  $G$ of the curve acts on a minimal free resolution $\mathbf{F}$ of the ring $S_X$. 
Notice that an action of a group $G$ on a graded module $M$ gives rise to a series of  linear representations $\rho_{d}:G \rightarrow M_d$ to all linear spaces $M_d$ of degree $d$ for $d\in \Z$.
For the case of the free modules $F_i$ of the minimal free resolution $\mathbf{F}$ 
we relate the actions of the group $G$ in both $F_i$ and in the dual $F_{g-2-i}$ in terms of an inner automorphism of $G$. 
This information is used in order to show that the action of the group $G$ on generators of the modules $F_i$ sends generators of degree $d$ 
to the set of generators of degree $d$. 

In the theory of syzygies in order to provide an invariant that does not depend on the selection of the minimal resolution, the $\mathrm{Tor}_i^S(k,S_X)$ is used, which is again a graded module. The degree $d$-part will be denoted by $\mathrm{Tor}_i^S(k,S_X)_d$,  which is a vector space of dimension $b_{i,d}$ and we prove that it is  also a $G$-module. 
 Moreover,
the representations to the $d$ graded space of
 each $F_i$,
$\rho_{i,d}:G \rightarrow \mathrm{GL}(F_{i,d})$ can be expressed as a direct sum of the $G$-modules $\mathrm{Tor}_i^S(k,S_X)_d$. We conclude by showing that the $G$-module structure of all $F_i$ is determined by knowledge of the $G$-module structure of $H^0(X,\Omega_X)$ and the $G$-module structure of each $\mathrm{Tor}_i^S(k,S_X)$ for all $0\leq i \leq g-2$. 

{\bf Acknowledgment:} The authors would like to thank K. Karagiannis for useful discussion concerning this article and his suggestions and corrections. The first author would like to also thank  G. Cornelissen for introducing him to Petri's theorem. 
I. Tsouknidas received financial support from  the Greek National scholarship foundation (IKY). 
\section{Automorphisms of curves and Petri's theorem}

Consider a complete non-singular non-hyperelliptic curve of genus $g\geq 3$ over an algebraically closed field $K$. 
Let $\Omega_X$ denote the sheaf of holomorphic differentials on $X$.
\begin{theorem}[Noether-Enriques-Petri]
There is a short exact sequence 
\[
0 \rightarrow I_X 
\rightarrow
\mathrm{Sym} H^0(X,\Omega_X) 
\rightarrow
\bigoplus_{n=0}^\infty H^0(X,\Omega_X^{\otimes n})
\rightarrow 
0,
\]
where $I_X$ is genereted by elements of degree $2$ and $3$. Also if $X$ is not a non-singular quintic of genus 6 or $X$ is not a trigonal curve, then $I_X$ is generated by elements of degree $2$. 
\end{theorem}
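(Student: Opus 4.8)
The plan is to separate the statement into its two classical ingredients: the exactness of the sequence, which is projective normality of the canonical curve (Max Noether's theorem), and the control of the degrees of the generators of the homogeneous ideal $I_X$ (the Enriques--Petri theorem). Write $C\subset\mathbb{P}^{g-1}$ for the canonical image of $X$. The surjectivity on the right of the sequence is equivalent to the surjectivity of all multiplication maps
\[
\mu_n\colon H^0(X,\Omega_X)\otimes H^0(X,\Omega_X^{\otimes(n-1)})\longrightarrow H^0(X,\Omega_X^{\otimes n}),\qquad n\geq 2,
\]
after which $I_X$ is by construction the kernel and exactness is automatic. I would establish surjectivity of $\mu_n$ by the base-point-free pencil trick: since $\Omega_X$ is base-point-free for $g\geq 2$, choose a base-point-free pencil $V=\langle s_0,s_1\rangle\subseteq H^0(X,\Omega_X)$, so that $0\to\Omega_X^{-1}\xrightarrow{(s_1,-s_0)}\mathcal{O}_X^{\oplus 2}\xrightarrow{(s_0,s_1)}\Omega_X\to 0$. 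Twisting by $\Omega_X^{\otimes(n-1)}$ and taking cohomology exhibits the cokernel of $\mu_n$ as a subquotient of $H^1(X,\Omega_X^{\otimes(n-2)})$, which vanishes for $n\geq 4$ by Serre duality. The remaining cases $n=2,3$ are precisely where the hypothesis that $X$ is non-hyperelliptic enters, via a general-position argument; here I would follow the classical treatment.

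For the second part I would run Petri's explicit analysis, assuming $g\geq 4$ (the case $g=3$, a plane quartic, is a hypersurface handled directly). A dimension count gives
\[
\dim I_{X,2}=\binom{g+1}{2}-(3g-3)=\binom{g-2}{2},
\]
so there are exactly $\binom{g-2}{2}$ quadrics through $C$. The core construction is a Petri basis $x_1,\dots,x_g$ of $H^0(X,\Omega_X)$ adapted to a general point, chosen so that $x_1,x_2$ span a base-point-free pencil and every product $x_ix_j$ with $i,j\geq 2$ can be reduced against a fixed standard set of monomials. This yields an explicit spanning family of quadrics $Q_{ij}\in I_{X,2}$, together with cubics $R_{ij}$ arising from the evident syzygies of the shape $x_1 Q-x_2 Q'$. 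Showing that the $Q_{ij}$ and $R_{ij}$ generate $I_X$ then gives generation in degrees $2$ and $3$.

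The refinement to degree $2$, which is the main obstacle, is to decide when each cubic $R_{ij}$ already belongs to the ideal generated by the quadrics. This is governed by a nondegeneracy (determinantal) condition on the structure constants of the Petri basis, or equivalently by the geometry of the base locus of the system of quadrics through $C$. Here I would appeal to the Enriques--Babbage dichotomy: the intersection of all quadrics containing a canonical curve equals $C$ itself unless $X$ is trigonal or a smooth plane quintic of genus $6$, in which cases the base locus is a two-dimensional rational normal scroll, respectively the Veronese surface in $\mathbb{P}^5$, and the cubics $R_{ij}$ become genuinely new generators. Consequently, outside these two exceptional families the quadrics already cut out $C$ scheme-theoretically, and together with projective normality they generate $I_X$; this is exactly the final assertion. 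The delicate part throughout is the bookkeeping of Petri's quadric and cubic relations and the verification that the failure of nondegeneracy is captured precisely by the trigonal and plane-quintic loci.
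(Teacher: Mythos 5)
You should know at the outset that the paper contains no proof of this theorem: it is quoted as classical background, and the proof is deferred entirely to the references \cite{Saint-Donat73} and \cite{MR895152}. So there is no in-paper argument to compare against; what you have written is an outline of the classical route, essentially the one in \cite{Saint-Donat73} (and in Arbarello--Cornalba--Griffiths--Harris), whereas the other cited proof, by Green--Lazarsfeld \cite{MR895152}, proceeds quite differently, by vector-bundle (kernel-bundle) techniques rather than Petri's explicit basis. Your outline is structurally sound: projective normality via the base-point-free pencil trick, with cokernels controlled by $H^1(X,\Omega_X^{\otimes(n-2)})$ and Serre duality for $n\geq 4$; the dimension count $\dim I_{X,2}=\binom{g-2}{2}$; Petri's adapted basis producing quadrics $Q_{ij}$ and cubics $R_{ij}$; and the Enriques--Babbage dichotomy isolating the trigonal and plane-quintic exceptions. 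Your restriction to $g\geq 4$ is also the correct one (for $g=3$ the canonical ideal is a single quartic, so the degree-$2,3$ statement fails; the paper's hypothesis $g\geq 3$ is slightly too generous). Note, however, that the two genuinely hard steps---surjectivity of $\mu_2,\mu_3$ via general position, and the verification that the $R_{ij}$ lie in the quadric ideal outside the exceptional cases---are precisely the steps you defer to ``the classical treatment,'' so your text is a roadmap rather than a self-contained proof; since the paper defers the whole proof anyway, this is not a defect relative to the paper.

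One logical point deserves correction. In your last paragraph you argue: outside the exceptional families the quadrics cut out $C$ scheme-theoretically, and ``together with projective normality they generate $I_X$.'' That implication is not automatic: for a general subvariety, being cut out scheme-theoretically by quadrics does not imply that the homogeneous ideal is generated in degree $2$ (generation concerns all graded pieces, not just the saturation), and projective normality alone does not bridge this gap. In Petri's argument the dichotomy is ideal-theoretic from the start: either every cubic $R_{ij}$ lies in the ideal generated by the $Q_{ij}$---in which case a reduction argument shows the quadrics generate all of $I_X$---or some $R_{ij}$ does not, and then one shows the quadrics vanish on a surface of minimal degree (a rational normal scroll or the Veronese in $\mathbb{P}^5$), which forces $X$ to be trigonal or a smooth plane quintic. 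The geometric dichotomy is thus a \emph{consequence} of the ideal-theoretic alternative, not the other way around; as written, your final inference runs in the weaker direction and would need to be restructured accordingly.
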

For a proof of this theorem we refer to \cite{Saint-Donat73}, \cite{MR895152}. The ideal $I_X$ is called 
{\em the canonical ideal} and it is the homogeneous ideal of the embedded curve $X \rightarrow \mathbb{P}_k^{g-1}$. The automorphism group of the ambient space $\mathbb{P}^{g-1}$ is known to be $\mathrm{PGL}_g(k)$, 
\cite[example 7.1.1 p. 151]{Hartshorne:77}. On the other hand every automorphism of $X$ is known to act on $H^0(X,\Omega_X)$  giving  rise to a representation 
\[
\rho: G \rightarrow \mathrm{GL}( H^0(X,\Omega_X)),
\] 
which is known to be faithful, when $X$ is not hyperelliptic and $p\neq 2$,   see \cite{MR3361016}. The representation $\rho$ in turn gives rise to a series of representations 
\[
\rho_d: G \rightarrow \mathrm{GL} (S_d), 
\] 
where $S_d$ is the vector space of degree $d$ polynomials in the ring $S:=k[\omega_1,\ldots,\omega_g]$. 

Let $X\subset \mathbb{P}^r$ be a projective algebraic set. Is it true that every automorphism $\sigma:X \rightarrow X$ comes as the restriction of an automorphism of the ambient projective space, that is by an element of $\mathrm{PGL}_k(r)$. For instance such a criterion for complete intersections is explained in \cite[sec. 2]{Kontogeorgis2002-to}. In the case of canonical embedded curves $X\subset \mathbb{P}^{g-1}$ it is clear that any automorphism $\sigma\in \mathrm{Aut}(X)$ acts also in $\mathbb{P}^{g-1}=\mathrm{Proj} H^0(X,\Omega_X)$. In this way we arrive to the following
\begin{lemma}
Every automorphism $\sigma\in \mathrm{Aut}(X)$ corresponds to an element in $\mathrm{PGL}_g(k)$ such that $\sigma(I_X) \subset I_X$ and every element in $\mathrm{PGL}_g(k)$ such that $\sigma(I_X) \subset I_X$
gives rise to an automorphism of $X$.
\end{lemma}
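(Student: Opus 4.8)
The plan is to prove the two implications separately, in both cases exploiting the naturality of the canonical embedding together with Petri's theorem, which identifies the homogeneous coordinate ring $S/I_X$ with the canonical ring $\bigoplus_{n\geq 0} H^0(X,\Omega_X^{\otimes n})$ and identifies $X$ with $\mathrm{Proj}(S/I_X)$. Since $X$ is non-hyperelliptic, the canonical map $\phi:X\rightarrow \mathbb{P}^{g-1}=\mathrm{Proj}\, S$ is a closed embedding whose image has homogeneous ideal exactly $I_X$; this is the geometric fact that drives both directions.

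For the forward direction, given $\sigma\in\mathrm{Aut}(X)$ I would first record that $\sigma$ acts on $H^0(X,\Omega_X)$ by pullback, so that in a chosen basis $\omega_1,\dots,\omega_g$ one has $\sigma^*\omega_i=\sum_j a_{ij}\omega_j$ for an invertible matrix $A=(a_{ij})$, whose class $\bar\sigma$ lies in $\mathrm{PGL}_g(k)$. The key step is to check that the canonical embedding is equivariant, i.e. that the square formed by $\sigma$ on $X$ and $\bar\sigma$ on $\mathbb{P}^{g-1}$ commutes: evaluating differentials at a point $P$ and noting that the local uniformizer contributes only a common nonzero scalar, one obtains $\phi(\sigma(P))=\bar\sigma(\phi(P))$ projectively. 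Consequently $\bar\sigma$ maps the image $\phi(X)$ isomorphically onto itself, and since $I_X$ is the ideal of this image, the induced graded automorphism of $S$ satisfies $\bar\sigma(I_X)=I_X$, in particular $\bar\sigma(I_X)\subset I_X$.

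For the converse, let $\sigma\in\mathrm{PGL}_g(k)$ satisfy $\sigma(I_X)\subset I_X$ and lift it to a graded ring automorphism of $S$. Because this automorphism is degree preserving, it maps each finite-dimensional graded piece $(I_X)_d$ injectively into itself, so by a dimension count the inclusion $\sigma(I_X)\subset I_X$ is forced to be the equality $\sigma(I_X)=I_X$. Hence $\sigma$ descends to a graded automorphism $\bar\sigma$ of the quotient $S/I_X$, and applying $\mathrm{Proj}$ together with Petri's identification $\mathrm{Proj}(S/I_X)=X$ yields an automorphism of $X$; equivalently, $\sigma$ is an automorphism of $\mathbb{P}^{g-1}$ preserving the closed subvariety $X$, so its restriction $\sigma|_X$ is an automorphism of $X$.

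The routine verifications, that pullback gives a linear action on differentials and that a graded automorphism fixing the ideal descends to the quotient, are standard. The step I expect to require the most care is the equivariance of the canonical embedding in the forward direction: controlling the scalar ambiguity coming from the choice of local uniformizer, so that the identity $\phi\circ\sigma=\bar\sigma\circ\phi$ holds genuinely in $\mathbb{P}^{g-1}$ rather than only up to a pointwise scalar. This is precisely what makes the passage to $\mathrm{PGL}_g(k)$, rather than $\mathrm{GL}_g(k)$, the natural and legitimate target.
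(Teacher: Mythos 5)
Your proof is correct and follows essentially the same route as the paper, which states this lemma without a separate proof, treating it as immediate from the fact that any automorphism of $X$ acts on $\mathbb{P}^{g-1}=\mathrm{Proj}\, H^0(X,\Omega_X)$ via its pullback action on differentials and, conversely, that a projective linear map preserving $I_X$ restricts to $V(I_X)\cong X$. Your write-up simply supplies the details the paper leaves implicit — the equivariance $\phi\circ\sigma=\bar\sigma\circ\phi$ of the canonical embedding, and the observation that degree-preservation plus finite-dimensionality of each graded piece upgrades $\sigma(I_X)\subset I_X$ to equality — both of which are sound.
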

In the  next section we will describe the elements  $\sigma \in \mathrm{PGL}_g(k)$ such that $\sigma(I_X) \subset I_X$.

\subsection{Algebraic equations of automorphisms}

\label{sec:algebraicEquations}

For now on we will assume that the canonical ideal $I_X$ is generated by polynomials in $k[\omega_1,\ldots,\omega_g]=\mathrm{Sym}H^0(X,\Omega_X) $ of degree $2$. 
Consider such a set of quadratic polynomials $\tilde{A}_1,\ldots,\tilde{A}_r$ generating $I_X$. 

A polynomial of degree two $\tilde{A}_i$  can be encoded in terms of a symmetric $g\times g$ matrix $A_i=(a_{\nu,\mu})$ as follows. Set $\bar{\omega}=(\omega_1,\ldots,\omega_g)^t$. We have
\[
\tilde{A}_i(\bar{\omega})=
\bar{\omega}^t A_i \bar{\omega}.
\]

Write $\sigma(A_i)$ for the symmetric $g\times g $ matrix such that $\sigma(\tilde{A}_i)=\bar{\omega}^t \sigma(A)_i\bar{\omega}$.

\begin{lemma}
For an element $\sigma\in \mathrm{GL}_{g}(k)$, $\sigma(I_X) \subset I_X$ holds if and only if for all $1\leq i \leq r$, $\sigma(A_i) \in \mathrm{span}_k \{A_1,\ldots,A_r\}$. 
\end{lemma}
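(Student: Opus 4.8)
The plan is to unwind the definitions on both sides of the claimed equivalence and reduce everything to a statement about the degree-$2$ graded pieces of $I_X$. Recall that $\sigma$ acts linearly on $H^0(X,\Omega_X)$, hence on the coordinates $\omega_1,\ldots,\omega_g$, and this extends to a graded ring automorphism of $S=k[\omega_1,\ldots,\omega_g]$. Since $I_X$ is generated in degree $2$, the containment $\sigma(I_X)\subset I_X$ is equivalent to the containment of degree-$2$ pieces $\sigma((I_X)_2)\subset (I_X)_2$. Indeed, a ring automorphism preserves degree, so $\sigma$ restricts to a linear map $(I_X)_2\to S_2$, and because $I_X$ is generated in degree $2$ the higher-degree pieces $(I_X)_d$ for $d\geq 3$ are spanned by products of elements of $(I_X)_2$ with monomials; the containment in degree $2$ therefore propagates automatically to all degrees. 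This is the observation that makes the whole statement a finite-dimensional linear-algebra condition.

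Next I would record the compatibility of the matrix encoding with the action of $\sigma$. The polynomials $\tilde A_1,\ldots,\tilde A_r$ form a $k$-basis of $(I_X)_2$, and the correspondence $\tilde A_i \leftrightarrow A_i$ sending a degree-$2$ polynomial $\bar\omega^t A\bar\omega$ to its symmetric Gram matrix $A$ is a $k$-linear bijection between $S_2$ and the space of symmetric $g\times g$ matrices. Under this bijection, $\sigma(\tilde A_i) = \bar\omega^t\sigma(A_i)\bar\omega$ corresponds precisely to $\sigma(A_i)$, by the very definition of $\sigma(A_i)$ given just above the statement. Because this correspondence is linear and injective, membership $\sigma(\tilde A_i)\in\mathrm{span}_k\{\tilde A_1,\ldots,\tilde A_r\}=(I_X)_2$ is equivalent to $\sigma(A_i)\in\mathrm{span}_k\{A_1,\ldots,A_r\}$.

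Combining these two reductions gives the result directly. The forward direction: if $\sigma(I_X)\subset I_X$, then in particular $\sigma(\tilde A_i)\in (I_X)_2$ for each $i$, so $\sigma(A_i)\in\mathrm{span}_k\{A_1,\ldots,A_r\}$. The backward direction: if $\sigma(A_i)\in\mathrm{span}_k\{A_1,\ldots,A_r\}$ for all $i$, then $\sigma(\tilde A_i)\in(I_X)_2$, hence $\sigma$ maps the generators $\tilde A_i$ of $I_X$ back into $I_X$; since $\sigma$ is a ring homomorphism it maps the ideal they generate into $I_X$, giving $\sigma(I_X)\subset I_X$. I would be mildly careful to note that $\sigma$ being invertible is what lets the span $\{A_1,\ldots,A_r\}$ be preserved rather than merely mapped into a larger space, though for the stated one-sided containment this is not strictly needed.

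The step I expect to require the most care is justifying that the degree-$2$ containment forces containment in all degrees, i.e. that generation in degree $2$ really does reduce $\sigma(I_X)\subset I_X$ to its degree-$2$ shadow. This rests on the standing hypothesis, recalled at the start of the subsection, that $I_X$ is generated by quadrics, together with the fact that $\sigma$ is a graded ($\deg$-preserving) automorphism of $S$; once those are in place the argument is purely formal. Everything else is a routine transcription between quadratic forms and symmetric matrices, so the only genuine content is this reduction to degree $2$.
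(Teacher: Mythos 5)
Your proof is correct and follows essentially the same route as the paper: both arguments rest on $\sigma$ preserving degree (which is what forces the coefficients $g^i_j$ in the paper's forward direction to be constants, i.e.\ your reduction to the degree-$2$ piece $(I_X)_2=\mathrm{span}_k\{\tilde A_1,\ldots,\tilde A_r\}$), on the injectivity of the symmetric-matrix encoding of quadrics (valid since $p\neq 2$, the paper's standing assumption), and on the ring-homomorphism property to propagate containment of generators to containment of the whole ideal. Your packaging via graded pieces is a slightly more structural presentation of the identical computation, so there is nothing to correct.
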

\begin{proof}
$(\Leftarrow)$ If $\sigma(A_i) =\sum_j \lambda^i_j A_j$, then
\begin{align*}
& \bar{\omega}^t\sigma^tA_i\sigma \bar{\omega} = \sum_j \lambda_j^i \bar{\omega}^t A_j \bar{\omega} \Rightarrow \\
& \left(\sigma \bar{\omega}\right)^t A_i \left(\sigma \bar{\omega}\right) = \sum_j \lambda_j^i \tilde{A}_j
\end{align*}
Hence $\sigma(\tilde{A}_i) = \sigma(\bar{\omega}^t A_i \bar{\omega} ) \in I_X $.

\noindent$(\Rightarrow)$ If $\sigma(I_X)\subseteq I_X$, there are $g^i_j(X)\in S$ such that
\begin{equation*}
\sigma(\tilde{A}_i)=\sum_j g^i_j(X)\tilde{A}_j.
\end{equation*}
Since $\sigma$ respects the degree of the polynomials, $\mathrm{deg}(g^i_j(X))=0$ for all $i$, hence $g^i_j(X)=\lambda^i_j\in k$.

\begin{align*}
\bar{\omega}^t \sigma(A_i) \bar{\omega}
 &= \sigma (\bar{\omega}^t A_i \bar{\omega})= \sigma (\tilde{A}_i) = \\
= \sum_j \lambda^i_j \tilde{A}_j &= \sum_j \lambda^i_j \left( \bar{\omega}^t A_j \bar{\omega} \right) = \bar{\omega}^t \left( \sum_j \lambda^i_j A_j \right) \bar{\omega}
\end{align*}
Since the matrices are symmetric it follows that $\sigma(A_i) \in \mathrm{span}_k\{\tilde{A_1},\dots,\tilde{A_r}\}$.
\end{proof}
The above lemma gives rise to a new representation
\begin{proposition}
\label{new-rep1}
Let $\sigma=(\sigma_{ij}) \in \mathrm{GL}_g(k)$. The element $\sigma$ gives rise to an automorphism of the curve with canonical ideal generated by $\tilde{A}_i, i=1,\ldots,r$ if and only 
\begin{equation} \label{actionI}
\sigma\left(
\tilde{A}_i
\right)
=\sum_{j=1}^r \lambda(\sigma)_{ji} \tilde{A}_j.
\end{equation}
On the other hand $\sigma$ acts on $\bar{\omega}$ by 
\[
\sigma(\bar{\omega})=(\sigma_{\mu,\nu})\bar{\omega}
\]
therefore eq. (\ref{actionI}) can be written as
\[
(\sigma_{\mu,\nu})^t A_i (\sigma_{\mu,\nu})=\sum_{j=1}^r \lambda(\sigma)_{ji} A_j \qquad \text{ for every } 1\leq i \leq j.
\] 
\end{proposition}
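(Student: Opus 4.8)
The plan is to derive the statement by combining the two preceding lemmas and then performing a direct matrix computation. The first lemma identifies every $\sigma \in \mathrm{Aut}(X)$ with an element of $\mathrm{PGL}_g(k)$ satisfying $\sigma(I_X)\subset I_X$, and conversely; the second lemma translates the condition $\sigma(I_X)\subset I_X$ into the requirement that $\sigma(A_i)\in \mathrm{span}_k\{A_1,\dots,A_r\}$ for every $i$. I would simply record that this span membership is, written out in coordinates, precisely the existence of scalars which I organize into a matrix $\lambda(\sigma)=(\lambda(\sigma)_{ji})$, giving equation (\ref{actionI}).

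For the first part I would argue as follows. By the first lemma, $\sigma$ (viewed in $\mathrm{PGL}_g(k)$, equivalently up to scalar in $\mathrm{GL}_g(k)$) gives rise to an automorphism of $X$ if and only if $\sigma(I_X)\subset I_X$. Since $I_X$ is generated in degree $2$ by $\tilde A_1,\dots,\tilde A_r$ and $\sigma$ is degree-preserving, this inclusion is equivalent to $\sigma(\tilde A_i)$ lying in the degree-$2$ part of $I_X$ for each $i$, i.e. to the existence of scalars $\lambda(\sigma)_{ji}\in k$ with $\sigma(\tilde A_i)=\sum_j \lambda(\sigma)_{ji}\tilde A_j$. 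This is exactly (\ref{actionI}), and I choose the index order $ji$ rather than $ij$ so that $\sigma\mapsto \lambda(\sigma)$ becomes a group homomorphism.

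For the second part the computation is routine. From $\sigma(\bar\omega)=(\sigma_{\mu,\nu})\bar\omega$ and $\tilde A_i(\bar\omega)=\bar\omega^t A_i\bar\omega$ I would expand the left-hand side as $\sigma(\tilde A_i)=(\sigma\bar\omega)^t A_i(\sigma\bar\omega)=\bar\omega^t\big((\sigma_{\mu,\nu})^t A_i(\sigma_{\mu,\nu})\big)\bar\omega$, and the right-hand side of (\ref{actionI}) as $\bar\omega^t\big(\sum_j \lambda(\sigma)_{ji}A_j\big)\bar\omega$. Both bracketed matrices are symmetric (a congruence $\sigma^t A_i\sigma$ of a symmetric matrix is symmetric, as is any linear combination of symmetric matrices), so matching the two quadratic forms yields the claimed identity $(\sigma_{\mu,\nu})^t A_i(\sigma_{\mu,\nu})=\sum_j \lambda(\sigma)_{ji}A_j$.

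The only genuine subtlety, and the single step I would flag as needing care, is the passage from an equality of quadratic forms to an equality of the underlying symmetric matrices. The correspondence $A\mapsto \bar\omega^t A\bar\omega$ on symmetric matrices is injective precisely because $p\neq 2$: the coefficient of $\omega_\nu\omega_\mu$ with $\nu\neq\mu$ in $\bar\omega^t A\bar\omega$ equals $2a_{\nu\mu}$, which recovers $a_{\nu\mu}$ only when $2$ is invertible. Thus the standing hypothesis $p\neq 2$ is exactly what legitimizes the matrix reformulation; everything else is bookkeeping built on the two lemmas already proved.
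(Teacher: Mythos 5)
Your proposal is correct and follows essentially the same route as the paper: the proposition is there simply the matrix reformulation of the two preceding lemmas, and the paper's own argument (embedded in the proof of the span-membership lemma) is exactly your expansion $\sigma(\tilde A_i)=(\sigma\bar\omega)^t A_i(\sigma\bar\omega)=\bar\omega^t\bigl(\sigma^t A_i\sigma\bigr)\bar\omega$ followed by matching symmetric matrices. Your explicit flagging of the $p\neq 2$ hypothesis as what makes the quadratic-form-to-matrix passage injective is a point the paper leaves implicit in its standing assumptions, and is a welcome clarification rather than a deviation.
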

Finding $g\times g$ matrices $(\sigma_{\mu,\nu})$ and elements $(\lambda_{i,j})$ so that the above equation holds will be called the {\em matrix automorphism problem}.

\subsection{The automorphism group as an algebraic set.}
Consider a set $A_1,\ldots,A_r$  of linear independent elements in the vector space of $g\times g$ symmetric matrices. For each invertible matrix $\sigma=(\sigma_{ij})\in \mathrm{GL}_g(k)$ we consider the $r$-elements $\sigma^t A_i \sigma$, $1\leq i  \leq r$. 
Fixing a basis for the space of $g\times g$ symmetric  matrices we can write any element $A_i$ as  $g(g+1)/2\times 1$ column matrix $\bar{A}_i$, that is 
\begin{align*}
\bar{\cdot}: \text{Symmetric } g\times g \text{ matrices} &\longrightarrow 
k^{\frac{g(g+1)}2} \\
A &\longmapsto \bar{A}
\end{align*}
We can now put together the $r$ elements $\bar{A}_i$ as a $g(g+1)/2 \times r$ matrix 
$\left( \bar{A}_1 | \cdots | \bar{A}_r \right)$, 
which  has full rank $r$, since $\{A_1,\ldots,A_r\}$ are assumed to be linear independent.

We have that $\sigma$ is an automorphism if the $g(g+1)/2\times 2r$-matrix
\[
B(\sigma)=
[\bar{A}_1,
\ldots,\bar{A}_r,
\overline{\sigma^t A_1\sigma},\ldots,\overline{\sigma^t  A_r \sigma}]
\]
has rank $r$, which means that $(r+1)\times (r+1)$-minors of $B(\sigma)$ are zero. 
This provides us with a description of the automorphism group as a determinantal variety. 

A simpler way to obtain algebraic equations, in order to see the automorphism curve as a finite algebraic group in $\mathrm{GL}_{g}(k)$ is using Gauss elimination to find a $\frac{g(g+1)}2\times \frac{g(g+1)}2$ invertible matrix  $Q$ which puts the matrix $\left( \bar{A}_1 | \cdots | \bar{A}_r \right)$ in echelon form, that is 
\[
Q \left( \bar{A}_1 | \cdots | \bar{A}_r \right)
=
\begin{pmatrix}
\mathbb{Id}_r \\
\hline
\mathbb{O}_{
\left(
\frac{g(g+1)}2-r
\right)\times r}
\end{pmatrix}.
\]
But then for each $1\leq i \leq r$ we have
\[
\overline{\sigma^t A_i \sigma} =
\sum_{j=1}^r \lambda_{j i} \bar{A}_i
\]
if and only if the lower $(\frac{g(g+1)}2 -r) \times r$ bottom block matrix of the matrix
\begin{equation}
\label{conditionAutomorphism}
\left(
\overline{\sigma^t A_1\sigma},\ldots,\overline{\sigma^t  A_r \sigma} 
\right)
\end{equation}
 is zero, while the top $r\times r$ block matrix gives rise to the representation
\[
\rho_1: G \rightarrow \mathrm{GL}_r(k), 
\]
defined in proposition \ref{new-rep1}.
Assuming that the lower $(\frac{g(g+1)}2 -r) \times r$ bottom block matrix gives us $r (\frac{g(g+1)}2 -r)$ equations where the entries $\sigma=(x_{ij})$ are seen as indeterminates. 
In this way we can write down elements of the automorphism group as a zero dimensional algebraic set, satisfying certain algebraic equations. 

\subsubsection{An example: Fermat curve}
Consider the projective non singular curve given by equation 
\[
F_n: x_1^n+x_2^n+x_0^n=0
\]
This curve has genus $g=\frac{(n-2)(n-1)}{2}$. 
Set $x=x_1/x_0$, $y=x_2/x_0$. 
For 
$\omega=\frac{dx}{y^{n-1}}=-\frac{dy}{x^{n-1}}$ we have that the set 
\begin{equation}
\label{Hol-Fermat}
x^i y^j \omega \text{ for } 0 \leq i+j \leq n-3
\end{equation}
forms a basis for holomorphic differentials, \cite{MR1643304}, \cite{Towse96}, \cite{MR2690176}.
These $g$ differentials are ordered lexicographically according to $(i,j)$, that is 
\[
\omega_{0,0} < \omega_{0,1} <\cdots < \omega_{0,n-3} < 
\omega_{1,0} < \omega_{1,1} < \cdots < \omega_{1,n-4} <
\cdots < \omega_{n-3,0}. 
\]
The case $n=2$ is a rational curve, the case $n=3$ is an elliptic curve, the case $n=4$ has genus $3$ and gonality $3$, the case $n=5$ has genus $6$ and is quintic so the first Fermat curve which has canonical ideal generated by quadratic polynomial is the case $n=6$ which has genus $10$. 
The appendix is devoted to the proof of the following
\begin{proposition}
\label{canGenerators}
The canonical ideal of the Fermat curve $F_n$ consists of two sets of relations
\begin{equation}\label{G1}
G_1=\{
\omega_{i_1,j_1} \omega_{i_2,j_2}- \omega_{i_3,j_3}
\omega_{i_4,j_4}: i_1+i_2 =i_3+i_4, j_1+j_2=j_3+j_4
\},
\end{equation}
and
\begin{equation}
\label{G2}
G_2=
\left\{
\omega_{i_1,j_1} \omega_{i_2,j_2}
+
\omega_{i_3,j_3} \omega_{i_4,j_4}
+
\omega_{i_5,j_5} \omega_{i_6,j_6}=0 
:
\substack{
	i_1+i_2=n+a,  \\
	i_3+i_4=a, \\
	i_5+i_6=a, 
}
\substack{
	j_1+j_2=b \\
j_3+j_4=n+b \\
j_5+j_6=b
}
\right\}
\end{equation}
where $0\leq a,b$ are selected such that $0\leq a+b \leq n-3$.
\end{proposition}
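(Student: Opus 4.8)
The plan is to first verify that every element of $G_1$ and $G_2$ genuinely lies in the canonical ideal $I_X$, and then to show by an exact dimension count that together they exhaust it. For the inclusion I would pass from the abstract variables $\omega_{i,j}$ to the differentials $x^iy^j\omega$, using that the quadratic part of the surjection in the Noether--Enriques--Petri theorem is the multiplication map $\mu:\mathrm{Sym}^2 H^0(X,\Omega_X)\to H^0(X,\Omega_X^{\otimes 2})$ sending $\omega_{i_1,j_1}\omega_{i_2,j_2}\mapsto x^{i_1+i_2}y^{j_1+j_2}\omega^2$. A relation in $G_1$ then maps to $(x^{i_1+i_2}y^{j_1+j_2}-x^{i_3+i_4}y^{j_3+j_4})\omega^2$, which vanishes precisely because the exponents agree, and a relation in $G_2$ maps to $x^ay^b(x^n+y^n+1)\omega^2$, which vanishes because $x^n+y^n+1=0$ on $F_n$. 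This is the routine half and shows $G_1\cup G_2\subseteq I_X$.

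For generation I would note that, since for $n\ge 6$ the curve $F_n$ is non-hyperelliptic, non-trigonal and not the quintic of genus $6$, the Noether--Enriques--Petri theorem forces $I_X$ to be generated in degree $2$; hence it suffices to prove that $\mathrm{span}_k(G_1\cup G_2)$ equals the whole space of quadratic relations $\ker\mu$. I would grade the quadratic monomials by the pair $(I,J)=(i_1+i_2,\,j_1+j_2)$ recording the bidegree of $x^Iy^J\omega^2$. The relations of $G_1$ are exactly the differences of monomials with a common $(I,J)$, so modulo $\langle G_1\rangle$ the degree-two part of $S$ is spanned by one class $z_{I,J}$ for each attainable pair. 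Since each factor satisfies $i+j\le n-3$, the attainable pairs are the lattice points of the doubled simplex $\{I,J\ge 0,\ I+J\le 2(n-3)\}$, and this determines $\dim\mathrm{Sym}^2 H^0(X,\Omega_X)-\dim\langle G_1\rangle$ combinatorially.

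Next I would read each $G_2$ relation modulo $\langle G_1\rangle$ as the linear relation $z_{n+a,b}+z_{a,n+b}+z_{a,b}=0$ among these classes, valid for exactly those $(a,b)$ in the stated range for which all six differentials $\omega_{i_k,j_k}$ are holomorphic. The observation that makes the count close up is that the class $z_{n+a,b}$, the unique one with first index $\ge n$, occurs in no other $G_2$ relation; hence the $G_2$ relations are linearly independent modulo $\langle G_1\rangle$, and solving each for its private variable $z_{n+a,b}$ eliminates precisely the monomials with $I\ge n$. What remains is the spanning set $\{z_{I,J}:I\le n-1\}$ for the image of $\mu$, and a direct count shows its cardinality equals $3g-3$.

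Finally I would use that $\mu$ is surjective (it is the degree-$2$ component of the surjection in the Noether--Enriques--Petri theorem) together with $\dim H^0(X,\Omega_X^{\otimes 2})=3g-3$, so the remaining classes both span $H^0(X,\Omega_X^{\otimes 2})$ and number exactly $3g-3$; they are therefore a basis and carry no further relations. Comparing dimensions gives $\dim\langle G_1\cup G_2\rangle=\tfrac{g(g+1)}2-(3g-3)=\dim\ker\mu$, and combined with the inclusion of the first step this forces $\langle G_1\cup G_2\rangle=\ker\mu$, which finishes the proof. The main obstacle is keeping the bookkeeping exact: verifying that the attainable bidegrees really fill the doubled simplex, and that the $G_2$ relations are independent modulo $\langle G_1\rangle$. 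The surjectivity of $\mu$ is precisely what lets me conclude the independence of the surviving monomials in $H^0(X,\Omega_X^{\otimes 2})$ without computing their orders of vanishing at the ramification points by hand.
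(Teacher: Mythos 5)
Your proof is correct, and the bookkeeping you flag does go through: $\mathbf{A}+\mathbf{A}$ is indeed the full set of lattice points with $I+J\le 2(n-3)$; the $G_2$ relations exist precisely when $a+b\le n-6$ (so that all six indices lie in $\mathbf{A}$, which is also the restriction the paper imposes); and in each such relation only $z_{n+a,b}$ has first index $\ge n$, so distinct relations even share no classes at all. Your route, however, is genuinely more elementary than the paper's. The paper runs the same dimension count through Gr\"obner-basis machinery following \cite{1905.05545}: it fixes a term order (Definition \ref{term-order}), reduces the claim via Lemma \ref{lemma 1} (stated there without proof) to the bound $\dim_k\bigl(S/\init(J)\bigr)_2\le 3(g-1)$, shows that the degree-two monomials avoiding the initial terms of $G_1$ are exactly the $\prec$-minimal monomial over each point of $\mathbf{A}+\mathbf{A}$ (Proposition \ref{proposition 2}), and then strikes out those lying over $C=\{(n+a,b):a+b\le n-6\}$ as initial terms of elements of $G_2$ (Proposition \ref{proposition 3}); the count $\#(\mathbf{A}+\mathbf{A})-\#C=3(g-1)$ closes the argument. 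Your classes $z_{I,J}$ are precisely the paper's minimal monomials $\sigma(I,J)$, and your private-variable elimination is the linear-algebra shadow of their initial-term computation over $C$. What your approach buys: no term order, no initial ideals, and no appeal to an auxiliary reduction lemma --- only surjectivity of the multiplication map $\mu$, the identity $\dim H^0(X,\Omega_X^{\otimes 2})=3g-3$, and Petri's degree-two generation; moreover you verify explicitly the inclusion $G_1\cup G_2\subseteq I_X$ by substituting $x^iy^j\omega$ and using the curve equation, a step the paper leaves implicit in the hypotheses of Lemma \ref{lemma 1}. What the paper's approach buys: it exhibits the degree-two part of an actual Gr\"obner-type structure on $I_X$ and plugs into the general method of \cite{1905.05545}, which is the right framework when one wants a Gr\"obner basis for computation (or a relative/deformation version) rather than merely a generating set.
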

\subsubsection{Automorphisms of the Fermat curve}
The group of automorphisms of the Fermat curve in  is given by \cite{Tze:95},\cite{Leopoldt:96}
\[
G=
\begin{cases}
\mathrm{PGU}(3,p^h), & \text{ if }
n=1+p^h
\\
(\Z/n\Z \times \Z/n\Z) \rtimes S_3, &
\text{ otherwise }
\end{cases}
\]
The action of the automorphism group is given in terms of a $3\times 3$ matrix $A$ sending 
\[
x=(x_1/x_0) \mapsto 
\frac{
	\sum_{i=0}^2 a_{1,i}x_i
}{
	\sum_{i=0}^2 a_{0,i}x_i
}
\qquad
y=(x_2/x_0) \mapsto 
\frac{
	\sum_{i=0}^2 a_{2,i}x_i
}{
	\sum_{i=0}^2 a_{0,i}x_i
},
\]
In characteristic $0$, the matrix $A$ is a monomial matrix, that is, it has only one non-zero element in each row and column and this element is an $n$-th root of unity. Two matrices $A_1,A_2$ give rise to the same automorphism if and only if they differ by an element in the group $\{\lambda \mathbb{I}_3: \lambda \in k\}$. In any case the group $G$ is naturally a subgroup of $\mathrm{PGL}_3(k)$. Finding the representation matrix of $G$ as an element in $\mathrm{PGL}_{g-1}(k)$ is easy when $n\neq 1+p^h$ and more complicated in $n=1+p^h$ case. We have two different embeddings of the Fermat curve $F_n$ in projective space
\[
\xymatrix{
\mathbb{P}^{g-1}_k &	F_n \ar[r] \ar[l] & \mathbb{P}^2_k .
}
\]
In both cases the automorphism group is given as restriction of the automorphism group of the ambient space. 

The computation of the automorphism group in terms of the vanishing of the polynomials given in equation (\ref{conditionAutomorphism}) is quite complicated. The following program gives an idea of the complexity of this approach. The automorphism group for the $n=6$ case is described as an algebraic set described by $g^2=100$ variables and $756$ equations. 

> FermatCurve(6,Rationals());
> x_{7,8}*x_{10,10} - 2*x_{9,8}*x_{9,10} + x_{10,8}*x_{7,10},
>
>.................756 equations....................
>
> >x_{7,9}*x_{10,10} - 2*x_{9,9}*x_{9,10} + x_{10,9}*x_{7,10}


\section{Syzygies}

\label{sec:syzygies}

\subsection{Free resolutions}

Recall that $S=k[\omega_1,\ldots,\omega_g]$ is the polynomial ring in $g$ variables. Let $M$ be a graded $S$-module generated by the elements $m_1,\ldots,m_r$ of corresponding degrees $a_1,\ldots,a_r$. 
We consider the free $S$-module
$
F_0=\bigoplus_{j=1}^r S(-a_j)
$ 
together with the onto map 
\[
F_0=\bigoplus_{j} S(-a_j) 
\stackrel{\pi}{\longrightarrow}
M.
\]
Let us denote by $M_1,\ldots,M_r$ elements of $F_0$, 
such that $\pi(M_i)=m_i$, assuming also that $\deg(M_i)=\deg(m_i)$,
 for $1\leq i \leq r$.  
In the above formula we have used the isomorphism 
\begin{align}
\label{graded-isomorphism}
\bigoplus_{j=1}^r S(-a_j) 
& \longrightarrow \bigoplus_{j=1}^r M_i S 
\\
(s_1,\ldots,s_r) 
& \longmapsto
\sum_{j=1}^s s_i M_i. \nonumber
\end{align}
We introduced the shifts in the grading so that 
$\deg_{S(-a_j)} s_j =\deg_M (s_j M_j)$, that is 
$\deg_{S(-a_j)} s_j =\deg_S (s_j) +\deg(M_j)=\deg_S (s_j)+a_j$.

The kernel of the map $\pi$ is again a finite generated $S$-module and by continuing this operation we arrive at a free resolution of $M$, that is,
at a sequence of free $S$-modules
\begin{equation} \label{free-resolution}
\xymatrix{
	0 \ar[r] & 
	F_g \ar[r]^{\phi_g} &
	\cdots \ar[r] &
    F_1 \ar[r]^{\phi_1} \ar[r] &
    F_0
},
\end{equation}
where $\mathrm{coker}(\phi_1)=F_0/\mathrm{Im}\phi_1=F_0/\mathrm{ker}\pi \cong M$. 

Let $\mathfrak{m}$ be the maximal ideal of $S$ generated by $\langle \omega_1,\ldots,\omega_g \rangle$. The pair $(S,\mathfrak{m})$ behaves in many aspects like a local ring, see \cite{MR494707}. 
The graded free resolution in eq. (\ref{free-resolution}) is called minimal if for each $i$ the image of $\phi_i$ is contained in $\mathfrak{m}F_{i-1}$ or equivalently if the reduced maps $\bar{\phi_i}: F_{i}/\mathfrak{m}F_i \rightarrow F_{i-1}/\mathfrak{m} F_{i-1}$ are zero. From now on all free resolutions, we will use, are assumed to be minimal. 

Each free module in the resolution can be written as 
\[
F_i=\bigoplus_j S(-j)^{\beta_{i,j}}.
\]
The integers $\beta_{i,j}$ are called the Betti numbers of the resolution. The Betti diagram of the free resolution is given by the following table:
\begin{center}
\newcolumntype{g}{>{\columncolor{Gray}}c}
\begin{tabular}{|g|c|c|c|c|}
\hline
\rowcolor{LightCyan}
 & 0 &
1  &
$\cdots$ &
r\\
\hline
$i$ & $\beta_{0,i}$ & $\beta_{1,i+1}$  & $\cdots$  & $\beta_{r,i+r},$ \\
$i+1$ & $\beta_{0,i+1}$ & $\beta_{1,i+2}$ & $\cdots$ & $\beta_{r,i+r+1}$\\
$\vdots$ & $\vdots$ & $\vdots$ &   & $\vdots$\\
j& $\beta_{0,j}$ & $\beta_{1,j+1}$ & $\cdots$  & $\beta_{r,j+r}$  \\
\hline
\end{tabular}
\end{center}
For example the Fermat curve for $n=6$, that is $g=10$ gives rise to the following Betti diagram, as we can compute using the magma algebra system, \cite{Magma1997}:
\begin{center}
\newcolumntype{g}{>{\columncolor{Gray}}c}
\begin{tabular}{|g|c|c|c|c|c|c|c|c|c|}
\hline
\rowcolor{LightCyan}
 & 0 &
1  & 2 & 3 & 4  & 5 & 6 &  7  &8 \\
\hline
0 & 1 & 0 & 0 & 0 & 0 & 0 & 0 & 0 & 0 \\
1& 0 & 28 & 105 & 189 & 189& 105 & 27 & 0 & 0 \\
 2& 0 & 0 & 27 & 105 & 189 & 189 & 105 & 28 & 0 \\
3 & 0 & 0 & 0 & 0 & 0 & 0 & 0 & 0 & 1 \\
\hline
\end{tabular}
\end{center}
We observe that the above table is highly symmetrical and  this is not a coincidence. Observe also  that only the first three rows are non zero and  this is due to a general result stating that the Casteluovo-Mumford regularity for the ring $S/I_X$ is three, see \cite[9B]{MR2103875}. Also 
$28=\beta_{1,2}=\binom{g-2}{2}$.
The evident symmetry in the above matrix (top to down, left to right) is part of a general theory as we will see in section \ref{sec:Gorenstein}. 
We will introduce group actions on free resolutions first. 
\subsection{Extending group actions}

Let $M$ be a finitely generated graded $S$-module acted on by the group $G$. Let $m_1,\ldots,m_r$ be minimal  generators of $M$ of corresponding degrees $a_i$. Every element $\sigma\in G$ sends every generator $M_i$ of the free module $F_0$
\begin{equation}
\label{matrixA}
\sigma(M_i)= \sum_{\nu=1}^r a_{\nu,i} M_i, \text{ for some } a_{\nu,i} \in S. 
\end{equation}
\begin{remark}
We would like to point out here that unlike the theory of vector spaces, an element $x\in F_0$ might admit two different decompositions
\[
x=\sum_{i=1}^r a_i m_i=\sum_{i=1}^r b_i m_i,
\]
that is 
\[
\sum_{i=1}^r (a_i-b_i) m_i=0, 
\]
and if $a_{i_0}-b_{i_0}\neq 0$ we cannot assume that it is invertible, so we can't express $M_{i_0}$ as an 
$S$-linear combination of the other elements $M_i$, for $i_0\neq i, 1\leq i \leq r$. We can only deduce that $\{a_i-b_i\}_{i=1,\ldots,r}$ form a syzygy. 

Therefore one might ask if the matrix expression given in eq. (\ref{matrixA}) is unique.  In proposition \ref{prop:respect-degrees} we will prove that the elements $a_{\nu,i}$ which appear as coefficients in eq. (\ref{matrixA}) are in the field $k$ and therefore the expression is indeed unique. 
\end{remark}

Observe that we have extended the natural action of $\mathrm{Aut}(X)$ on $H^0(X,\Omega_X)$ to an action on the ring $S=\mathrm{Sym}H^0(X,\Omega_X)$, so that 
$\sigma(xy)=\sigma(x)\sigma(y)$. Therefore if $M=I_X$ then for all $s\in S$, $m\in I_X=M$ we have $\sigma(s m)=\sigma(s) \sigma(m)$. All the actions in the modules we will consider will have this property.  

We can extend the action of $G$ on the free $S$-module 
\[
F_0=M_1 S \oplus \cdots \oplus M_r S
\]
by the rule $\sigma(s M_i)=\sigma(s) \sigma(M_i)$.
As we did in the isomorphism in eq. (\ref{graded-isomorphism}) we would like to transfer the action to the direct sum of the shifted ring 
$\bigoplus_{j=1}^r S(-a_j)$. 
Observe that 
\[
\sigma
\left(
\sum_{j=1}^r s_j M_j
\right)
=
\sum_{j=1}^r \sigma(s_j) \sum_{\nu=1}^r a_{\nu,j}(\sigma) M_\nu=
\sum_{\nu=1}^r  
\left(
\sum_{j=1}^r a_{\nu,j}(\sigma) \sigma(s_j)
\right)
M_\nu,
\]
where $\deg_S a_{\nu,j}+a_\nu=\deg_S m_j$. This means that under the action of $\sigma\in G$ the $r$-tuple 
$(s_1,\ldots,s_r)^t$ is sent to
\[
\begin{pmatrix}
s_1 \\
\vdots \\
s_r
\end{pmatrix}
\stackrel{\sigma}{\longmapsto}
\begin{pmatrix}
a_{1,1}(\sigma) & a_{1,2}(\sigma) & \cdots & a_{1,r} (\sigma)
\\
\vdots & \vdots & & \vdots 
\\
a_{r,1}(\sigma) & a_{r,2}(\sigma) & \cdots & a_{r,r}(\sigma)
\end{pmatrix}
\begin{pmatrix}
\sigma(s_1) 
\\
\vdots
\\
\sigma(s_r)
\end{pmatrix}.
\] 
If $A(\sigma)=\big( a_{i,j}(\sigma) \big)$ is the matrix corresponding to $\sigma$ then for $\sigma,\tau\in G$ the following cocycle condition holds:
\[
A(\sigma \tau)=A(\sigma) A(\tau)^{\sigma}.
\]
If we can assume that $G$ acts trivially on the  matrix $A(\tau)$ for every $\tau\in G$ (for instance when $A(\tau)$ is a matrix with entries in $k$ for every $\tau\in G$), then the above cocycle condition becomes a homomorphism condition. 

Also if $A(\sigma)$ is a principal derivation, that is there is an $r\times r$ matrix $Q$, such that 
\[
A(\sigma)=\sigma(Q)\cdot Q^{-1}
\]
then after a basis change of the generators we can show that the action on the coordinates is just given by 
\[
(s_1,\cdots,s_r)^t
\stackrel{\sigma}{\longmapsto}
(\sigma(s_1),\cdots,\sigma(s_r))^t,
\]
that is the matrix  $A(\sigma)$ is the identity. 
We will call the action on the free resolution $\mathbf{F}$ obtained by extending the action on $M$ the standard action.

\subsection{Group actions on free resolutions}
\label{sec:actionOnFreeResolution}
Assume that $M$ is acted on by a group $G$ and let 
\begin{equation}
\label{free-resolution1}
0\rightarrow F_n \stackrel{\delta_n}{\longrightarrow}
F_{n-1} \rightarrow \cdots F_i 
\stackrel{\delta_i}{\longrightarrow}
F_{i-1}
\rightarrow 
\cdots
F_1
\rightarrow 
M 
\rightarrow 
0
\end{equation}
be a minimal free resolution, that is each $F_i$ is a free graded $S$-module and the image of each $\delta_i$ is in $\mathfrak{m} F_{i-1}$, or equivalently $\delta_i$ maps the basis of $F_i$ to a minimal set of generators of the image of $\delta_i$, see \cite[sec. 1B]{MR2103875}. The image of each $\delta_i$ is the $i$-th module of syzygies. 

Assume that each $F_i$ is acted on by $G$ and that the maps $\delta_i$ are $G$-equivariant. 
Then a series of representations can be defined:
\begin{equation}
\label{TorRep}
\rho_i: G \rightarrow 
\mathrm{GL}(\mathrm{Tor}_i^S(k,M)).
\end{equation}
Indeed, from the resolution given in eq. (\ref{free-resolution}) we can consider the complex 
\begin{equation} \label{tensor-resolution}
\cdots \rightarrow k\otimes F_i 
\stackrel{1_k \otimes \delta_i}{\longrightarrow}
k \otimes F_{i-1} 
\rightarrow 
\cdots
\end{equation}
Since the $\delta_i$ maps are zero modulo $\mathfrak{m}$ all maps in the above complex are zero. On the other hand the homology of the complex (\ref{tensor-resolution}) is by definition $\mathrm{Tor}_i^S(k,M)$ so $\mathrm{Tor}_i^S(k,M)=k\otimes F_i$. 

We will now  study  the  action of the group $G$ on the generators of $F_i$. First of all we have that 
\[
F_i=\bigoplus_{\nu=1}^{r_i} \bigoplus_{\mu=1}^{b_{i,\nu}} 
e_{i,\nu,\mu} S \cong
\bigoplus_{\nu=1}^{r_i} 
 S(-d_{i,\nu})^{b_{i,\nu}}.
\]
In the above formula we assumed that $F_i$ is generated by elements $e_{i,\nu,\mu}$ such that the degree of $e_{i,\nu,\mu}=d_{i,\nu}$ for all $1\leq \mu \leq b_{i,\nu}$. We also assume that 
\[
d_{i,1} < d_{i,2} < \cdots < d_{i,r_i}. 
\]
The action of $\sigma$ is respecting the degrees, so an element of minimal degree $d_{i,1}$ is sent to a linear combination of elements of minimal degree $d_{i,1}$. In this way we obtain a representation 
\[
\rho_{i,1}: G \rightarrow \mathrm{GL}(b_{i,1},k). 
\]
In a similar way an element $e_{i,2,\mu}$ of degree $d_{i,2}$ is sent to an element of degree $d_{i,2}$ and we have that 
\[
\sigma(e_{i,2,\mu})=
\sum_{j_1=1}^{b_{i,2}} \lambda_{i,2,\mu,j_1} e_{i,2,j_1}
+
\sum_{j_2=1}^{b_{i,1}} \lambda'_{i,2,\mu,j_1} e_{i,1,j_2},
\]
where all $\lambda_{i,2,\mu,j_1}\in k$
and all $\lambda'_{i,1,\mu,j_2} \in S/\mathfrak{m}^{d_{i,2}-d_{i,1}+1}$. 
In this case we have a representation with entries in an Artin algebra instead of a field, which has the form:
\[
\rho_{i,2}:G \rightarrow \mathrm{GL}(b_{i,1}+b_{i,2},
S/\mathfrak{m}^{d_{i,2}-d_{i,1}+1}),
\]
\[
\sigma \mapsto 
\begin{pmatrix}
A_1(\sigma) & A_{1,2}(\sigma) \\
0 & A_2(\sigma)
\end{pmatrix},
\]
where $A_1(\sigma)\in \mathrm{GL}(b_{i,1},k)$ and $A_2(\sigma)\in \mathfrak{m}^{d_{i,2}-d{i,1}}\mathrm{GL}(b_{i,2},k)$. 

By induction the situation in the general setting gives rise to a series of nested representations:
\[
\rho_{i,j}: G \rightarrow \mathrm{GL}(b_{i,1}+b_{i,2},
S/\mathfrak{m}^{d_{i,j}-d_{i,1}+1})
\]
\begin{equation}
\label{representation-matrix}
\sigma \mapsto 
A(\sigma)=
\begin{pmatrix}
A_1(\sigma) &  A_{1,2}(\sigma) & \cdots & A_{1,j}(\sigma) \\
0 & A_2(\sigma) &  & A_{2,j}(\sigma) \\
\vdots & \ddots &  & \vdots \\
0 & \cdots & 0 & A_{j}(\sigma)
\end{pmatrix}
\end{equation}
where $A_\nu(\sigma) \in \mathrm{GL}(b_{i,\nu},k)$
and $A_{\kappa,\lambda}(\sigma)$ is an 
$b_{i,\kappa} \times b_{i,\lambda}$ matrix with coefficients in $\mathfrak{m}^{b_{i,\lambda}-b_{i,\kappa}}$.
The representation $\rho_{i,r_i}$ taken modulo $\mathfrak{m}$ reduces to $\mathrm{Tor}_i^S(k,M)$, seen as a $k[G]$-module. 

\begin{remark}
Using the same construction as in the Maschke theorem, see \cite[p.13]{alperin} we can prove that this representation is equivalent to a block diagonal representation, if the characteristic $p \nmid |G|$.
We will see in proposition \ref{prop:respect-degrees} that under mild hypotheses this requirement can be removed.  
\end{remark}

\subsection{Gorenstein symmetry}
\label{sec:Gorenstein}
For the canonical embedding $X \hookrightarrow \mathbb{P}^{g-1}$ we have $r=g-1$,  therefore $S(-r-1)=S(-g)$. 
The $S$-module $S(-r-1)$ is special, for instance since $X$ is smooth we have by \cite[th. 3.1]{MR0274461} that the sheaf corresponding to the module $S(-g)$ is canonically isomorphic to $\wedge^{g-1} \Omega^{1}_X$ thus it is naturally acted by the automorphism group of $X$.

If $\mathbf{F}$ is a free resolution
\[
\mathbf{F}:
\xymatrix{0 \ar[r] &  F_r
\ar[r]^{\phi_r} &
\cdots
\ar[r]^{\phi_{i+1}} &
 F_i \ar[r]^{\phi_i} &
 F_{i-1} \ar[r]^{\phi_{i-1}} &
\cdots \ar[r] &
F_1 \ar[r]^{\phi_1} & 
F_0
 }
\]
 of $S/I_X$ then we twist  it by $S(-r-1)$ in order to obtain a new free resolution
  $\mathrm{Hom}_S(F_i,S(-g))$ of $S/I_X$:
\[
\mathbf{F}^*:
\xymatrix{0  & \ar[l]   F_r
 &\ar[l]_{\phi_r^*} 
\cdots
& \ar[l]_{\phi_{i+1}^*} 
 F_i  & \ar[l]_{\phi_i^*} 
 F_{i-1} & \ar[l]_{\phi_{i-1}^*} 
\cdots & \ar[l] 
F_1 & \ar[l]_{\phi_1^*} 
F_0
 }
\]
The ring $S_X:=S/I_X$ is Cohen-Macauley of codimension $r-1=g-2$ therefore
\[
\mathrm{Ext}_S^{i}(S_X,S(-g))=0 \text{ for } i\neq g-2.
\]
This means that $\mathbf{F}$ is a free resolution of 
$\mathrm{Ext}_S^{g-2}(S_X,S(-g))$, which in turn is isomorphic to $S_X(1)$, see \cite[prop. 9.5]{MR2103875}.
\begin{lemma}
\label{Homdual}
We have 
\[
\mathrm{Hom}_S (S(-a),S(-b)) \cong S(-b+a).
\]
\end{lemma}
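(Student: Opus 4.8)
The plan is to identify a graded homomorphism out of the rank-one free module $S(-a)$ with the image of its single generator, which is the graded analogue of the familiar fact that $\mathrm{Hom}_S(S,M)\cong M$. Concretely I would establish, and then specialize, the natural isomorphism $\mathrm{Hom}_S(S(c),M)\cong M(-c)$ valid for every graded $S$-module $M$ and every $c\in\Z$; taking $c=-a$ and $M=S(-b)$ yields $\mathrm{Hom}_S(S(-a),S(-b))\cong S(-b)(a)=S(-b+a)$, which is exactly the claim.

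To carry this out, first I would fix the grading convention $S(c)_d=S_{c+d}$, so that $S(-a)$ is free on a single generator $\epsilon$ sitting in degree $a$ (the image of $1\in S_0$ under the shift, since $S(-a)_a=S_0$). A homomorphism of graded modules of degree $e$ is an $S$-linear map raising internal degrees by $e$; such a map $\phi\colon S(-a)\to S(-b)$ is completely determined by $\phi(\epsilon)$ because $\epsilon$ generates $S(-a)$ as an $S$-module, and freeness guarantees that any prescribed value of $\phi(\epsilon)$ extends uniquely to an $S$-linear map. This gives the evaluation assignment $\phi\mapsto\phi(\epsilon)$.

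The remaining task is the degree bookkeeping, which is the only place any care is needed. If $\phi$ has degree $e$ then $\phi(\epsilon)\in S(-b)_{a+e}=S_{a+e-b}$, while on the other hand $S(-b+a)_e=S_{e+a-b}$, so evaluation carries the degree-$e$ part of the Hom-module into $S(-b+a)_e$. I would then check that $\phi\mapsto\phi(\epsilon)$ is $S$-linear (immediate from the $S$-linearity of each $\phi$) and bijective in each degree: injectivity is the determination statement above, and surjectivity follows by sending a homogeneous element of $S(-b+a)$ to the unique $S$-linear extension of $\epsilon\mapsto m$. Assembling over all $e$ gives the graded isomorphism. I do not anticipate a genuine obstacle here; the statement is a structural property of free graded modules, and the entire content is keeping the two shift conventions consistent so that the twists add correctly.
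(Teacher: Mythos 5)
Your proposal is correct and follows essentially the same route as the paper: both identify a homomorphism out of the rank-one free module $S(-a)$ with the image of its degree-$a$ generator and then do the shift bookkeeping, the only cosmetic difference being that you verify the identification in every degree $e$ (via the general fact $\mathrm{Hom}_S(S(c),M)\cong M(-c)$), whereas the paper computes the degree-$0$ component $\mathrm{Hom}_0(S(-a),S(-b))\cong S_{a-b}$ and concludes from there.
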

\begin{proof}
There are elements $e_a,e_b$ of degrees $a,b$ respectively
such that $e_aS\cong S(-a)$ and $e_b S \cong S(-b)$, where the $S$-module isomorphisms are given by sending  $e f \mapsto f$. This means that $\deg_{e_aS}( e_af )= \deg_{S(-a)} f$ and $\deg_{e_bS}( e_bf )= \deg_{S(-b)} f$. In particular $\deg_{S(-a)} 1_S=a$ or in other words $S(-a)_d=S_{-a+d}$, see also \cite[prop. 2.3]{MR2560561}.

The set $\mathrm{Hom}_S(N,M)$ where $N,M$ 
are graded $S$-modules with $N$ finitely generated equals $\bigoplus_{i\in \Z} \mathrm{Hom}_i(N,M)$, where $\mathrm{Hom}_i(N,M)$ consists of all morphisms of degree $i$, see \cite[prop. 2.7]{MR2560561}.  

A homomorphism $\phi$ satisfies $\phi(e_a f)=f \phi(e_a)$ and 
 is described completely by $\phi(e_a)\in e_b S$. 
If $\phi$ is of degree $0$ we  furthermore 
 have 
$\phi(e_a)=e_b F_\phi$, where the degree of  $F_\phi$
satisfies the equation $a=b+\deg_S(F_\phi)$, that is $\deg(F_\phi)=a-b$.
This proves that 
$\mathrm{Hom}_{0}(S(-a),S(-b))\cong  S_{a-b}$, thus $\mathrm{Hom}_S(S(-a),S(-b))\cong S(-b+a)$.  
\end{proof}
\begin{corollary}
For $b=g$ we have
\[
\mathrm{Hom}_S( S(-a),S(-g))=S(-g+a)
\]
\end{corollary}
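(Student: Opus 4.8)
The plan is to simply specialize Lemma \ref{Homdual}, which has already been established in full generality. That lemma asserts the graded $S$-module isomorphism
\[
\mathrm{Hom}_S(S(-a),S(-b)) \cong S(-b+a)
\]
for arbitrary integer shifts $a$ and $b$. The corollary is nothing more than the instance $b=g$ of this statement, so the entire content is a substitution.

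Concretely, I would invoke Lemma \ref{Homdual} with the second shift set equal to $g$. Tracing through the lemma's argument specialized to this case: there is a generator $e_a$ of degree $a$ with $e_a S \cong S(-a)$ and a generator $e_g$ of degree $g$ with $e_g S \cong S(-g)$. A degree-zero homomorphism $\phi$ is determined by $\phi(e_a) = e_g F_\phi$, where the degree constraint $a = g + \deg_S(F_\phi)$ forces $\deg(F_\phi) = a - g$. Hence $\mathrm{Hom}_0(S(-a),S(-g)) \cong S_{a-g}$, and passing to the full graded Hom module gives $\mathrm{Hom}_S(S(-a),S(-g)) \cong S(-g+a)$, which is exactly the claim.

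Since the corollary is a direct specialization, there is no genuine obstacle to overcome; the only thing to watch is bookkeeping on the sign convention in the grading shift, namely that the isomorphism produces $S(-g+a)$ and not $S(-a+g)$. This is already handled correctly inside the proof of Lemma \ref{Homdual} via the normalization $S(-a)_d = S_{-a+d}$, so it suffices to cite that computation rather than redo it.
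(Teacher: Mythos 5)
Your proposal is correct and matches the paper exactly: the corollary is stated in the paper without proof precisely because it is the immediate specialization $b=g$ of Lemma \ref{Homdual}, which is all your argument does. Re-tracing the lemma's internal computation with $e_g$ in place of $e_b$ is harmless but unnecessary; citing the lemma suffices.
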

Since $F^*$ is a free resolution of $S_X(1)$ (shifted by one) we arrive at the symmetry
\begin{equation}
\label{BettiSymmetry}
\beta_{i,j}=\beta_{g-2-i,g+1-j}.
\end{equation}
This symmetry can also be interpreted in terms of  of Koszul cohomology, see \cite[prop. 4.1]{MR3729076}.
In general the Betti table for a canonical model is given by
\begin{center}
\newcolumntype{g}{>{\columncolor{Gray}}c}
\begin{tabular}{|g|c|c|c|c|c|c|c|c|c|c|c|}
\hline
\rowcolor{LightCyan}
 & 0 & 1  & $\cdots$ & $a$ & $a+1$  & $\cdots$ & $b-1$ &  $b$  & $\cdots$ & $g-3$ & $g-2$ \\
\hline
0 & 1 & 0 & $\cdots$ & 0 & 0 & $\cdots$ & 0 & 0 &  $\cdots$ & 0 & 0 \\
1& 0 & $\beta_1$ & $\cdots$ & $\beta_a$ & 
$\beta_{a+1}$ & $\cdots$ & $\beta_{g-3-a}$& 0 & $\cdots$ & 0 & 0 \\
 2& 0 & 0 & $\cdots$ & 0 & $\beta_{g-3-a}$ & $\cdots$ & $\beta_{a+1}$ & $\beta_a$ & $\cdots$ & $\beta_1$ &  0 \\
3 & 0 & 0 & $\cdots$ & 0 & 0 &$\cdots$  & 0& 0 & $\cdots$&0  & 1 \\
\hline
\end{tabular}
\end{center}
In the above table the $i,j$-entry corresponds to $\beta_{i,i+j}$. 
The Green conjecture states that the integer $a$
is equal to $\mathrm{Cliff}(X)-1$. Notice that the  Green conjecture is known to fail in positive characteristic, see \cite{1803.10481}.

\subsection{Unique actions}
Let us consider two actions of the automorphisms group $G$ on $H^0(X,\Omega_X)$, which can naturally be extended on the symmetric algebra
 $\mathrm{Sym}H^0(X,\Omega_X)$. We will denote the first action by $g\star v$ and the second action by $g \circ v$, where $g\in G$, $v\in \mathrm{Sym}H^0(X,\Omega_X)$.
\begin{proposition}
\label{prop:two-actions-by-iso}
If the curve $X$ satisfies the conditions of faithful action of $G=\mathrm{Aut}(X)$ on $H^0(X,\Omega_X)$, that is $X$ is not hyperelliptic and $p>2$, \cite[th. 3.2]{MR3361016} and moreover both actions $\star,\circ$
restrict to actions on the canonical ideal $I_X$, then there is an automorphism $i:G\rightarrow G$, such that 
$g\star v=i(g) \circ v$. 
\end{proposition}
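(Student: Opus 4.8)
The plan is to route both actions through the correspondence established earlier (the Lemma identifying $\mathrm{Aut}(X)$ with the elements of $\mathrm{PGL}_g(k)$ stabilizing the canonical ideal $I_X$), and to read off the automorphism $i$ from the two resulting embeddings of $G$ into $\mathrm{PGL}_g(k)$. First I would projectivize. Each action is a homomorphism $\rho_\star,\rho_\circ:G\to \mathrm{GL}(H^0(X,\Omega_X))$ whose extension to $\mathrm{Sym}\,H^0(X,\Omega_X)$ preserves $I_X$ by hypothesis. Composing with $\mathrm{GL}_g(k)\to \mathrm{PGL}_g(k)$ yields $\bar\rho_\star,\bar\rho_\circ:G\to \mathrm{PGL}_g(k)$ whose images lie in the stabilizer $P$ of $I_X$. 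By the Lemma, $P$ is exactly the image of the canonical embedding $\mathrm{Aut}(X)\hookrightarrow \mathrm{PGL}_g(k)$, and since $X$ is non-hyperelliptic with $p>2$ this canonical map is faithful, so it identifies $P\cong \mathrm{Aut}(X)=G$.

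Next I would produce $i$. Through the identification $P\cong G$ the maps $\bar\rho_\star,\bar\rho_\circ$ become group endomorphisms $\Phi_\star,\Phi_\circ$ of $G$; using that the actions are faithful (again the non-hyperelliptic, $p>2$ hypothesis) together with finiteness of $G$, each $\Phi$ is injective and hence an automorphism of $G$. I then set $i:=\Phi_\circ^{-1}\circ \Phi_\star\in \mathrm{Aut}(G)$, which by construction satisfies $\bar\rho_\star(g)=\bar\rho_\circ(i(g))$ in $\mathrm{PGL}_g(k)$ for every $g\in G$; equivalently $g\star v$ and $i(g)\circ v$ agree as points of $\mathbb{P}(H^0(X,\Omega_X))$, hence on $\mathrm{Sym}\,H^0(X,\Omega_X)$ up to a scalar.

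The main obstacle is the last step: upgrading this projective equality to the genuine equality $g\star v=i(g)\circ v$ in $\mathrm{Sym}\,H^0(X,\Omega_X)$. Writing $\rho_\star(g)=c_g\,\rho_\circ(i(g))$ for scalars $c_g\in k^\times$, multiplicativity of both actions forces $g\mapsto c_g$ to be a character of $G$, so what remains is to show this character is trivial. Here I would invoke the fact that the action of an automorphism of $X$ on $H^0(X,\Omega_X)$ is canonically given by pullback of differentials: both $\star$ and $\circ$ realize their induced automorphisms $\Phi_\star(g)$ and $\Phi_\circ(i(g))$ through this same canonical lift, and since $\Phi_\star(g)=\Phi_\circ(i(g))$ the two linear maps coincide on the nose, forcing $c_g=1$ and yielding the stated equality. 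I expect the careful justification that each given action agrees with the canonical pullback lift, rather than with a character twist of it, to be the delicate point of the argument.
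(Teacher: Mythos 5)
Your overall strategy is the same as the paper's: both proofs convert the hypothesis that $\star$ and $\circ$ preserve $I_X$ into maps $G\to\mathrm{Aut}(X)=G$ (via the lemma identifying elements of $\mathrm{PGL}_g(k)$ preserving $I_X$ with automorphisms of $X$), use faithfulness to see these maps are homomorphisms, and obtain $i$ by composing one with the inverse of the other. The only difference in routing is that the paper never projectivizes: it asserts outright that for each $g\in G$ there is $g^*\in\mathrm{Aut}(X)$ with $g\star v=g^*v$ \emph{exactly}, the right-hand side being the pullback action on differentials, and then takes $i$ to be the composite of $i_\star$ with $i_\circ^{-1}$. (Both proofs also tacitly assume the maps $i_\star,i_\circ$ are bijective; injectivity requires that no nontrivial element acts on $H^0(X,\Omega_X)$ by a scalar, which is not a formal consequence of the stated hypotheses either.)

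The step you flag as delicate is a genuine gap, and your proposed way of closing it is circular: the claim that each of $\star,\circ$ ``realizes its induced automorphisms through the canonical pullback lift, rather than a character twist of it'' is precisely what has to be proved, and it does not follow from the hypotheses. In fact the hypotheses are invariant under character twists while the conclusion is not. Let $\rho$ denote the pullback action and let $\chi:G\to k^\times$ be a nontrivial character (possible whenever $G$ has a nontrivial abelian quotient of order prime to $p$, e.g.\ $G\cong\Z/2\Z$). Set $g\circ v:=\chi(g)\,\rho(g)v$ and $\star:=\rho$. Then $\circ$ is a linear action of $G$ on $H^0(X,\Omega_X)$, and since $I_X$ is homogeneous its extension to $\mathrm{Sym}\,H^0(X,\Omega_X)$ still preserves $I_X$; yet no automorphism $i$ of $G$ satisfies $\rho(g)v=i(g)\circ v$ for all $v$: projectively this equation forces $\rho(g^{-1}i(g))$ to be a scalar, so $g^{-1}i(g)$ acts trivially on the canonically embedded curve, hence $i=\mathrm{id}$, and then $\chi\equiv 1$, a contradiction. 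So the exact equality $g\star v=i(g)\circ v$ cannot be derived from the hypotheses as stated; what your argument actually establishes is the projective statement $\rho_\star(g)=c_g\,\rho_\circ(i(g))$ with $g\mapsto c_g$ a character. You should know, however, that the paper's own proof contains the identical defect --- the unexplained assertion ``$g\star v=g^*v$'' is the same unjustified lift from $\mathrm{PGL}_g(k)$ to $\mathrm{GL}_g(k)$ --- so you have not missed an argument that the paper supplies. Your write-up is in fact more careful, since it isolates the difficulty; and the up-to-character version that both arguments prove is all that is used downstream (in lemma \ref{star-dual-action} and proposition \ref{prop:respect-degrees}, only degrees of generators are tracked, and scalar factors are harmless there).
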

\begin{proof}
Both actions of $G$ on $H^0(X,\Omega_X)$ introduce automorphisms of the curve $X$. That is since $G\star I_X=I_X$ and $G\circ I_X=I_X$, the group $G$ is mapped into $\mathrm{Aut}(X)=G$. This means that for every element $g\in G$ there is an element $g^*\in \mathrm{Aut}(X)=G$ such that 
$g\star v=g^*v$, where the action on the right is the standard action of the automorphism group on holomorphic differentials. By the definition of the group action  for every $g_1,g_2\in G$ we have $(g_1g_2)^*v=g_1^* g_2^*v$ for all $v\in H^0(X,\omega_X)$ and the faithful action of the automorphism group provides us with $(g_1g_2)^*=g_1^* g_2^*$, i.e. the map $i_*:g\mapsto g^*$ is a homomorphism. Similarly the map corresponding to the $\circ$-action,  $i_\circ:g\mapsto g^\circ$ is a homomorphism and the desired homomorphism $i$ is the composition of $i_* i_\circ^{-1}$. 
\end{proof}
So far we have introduced the map $\mathrm{Hom}_S(F_i,S(-g))$ which induces a symmetry of the free resolution $\mathbf{F}$ by  sending  $F_i$ to $F_{g-2-i}$. 
Then each free module $F_i$ of the resolution  $\mathbf{F}$ is equipped by the extension of the action on holomorphic differentials,  according to the construction of section \ref{sec:actionOnFreeResolution}. On the other hand since $S(-g)$ is a $G$-module we have that $F_{g-2-i}\cong\mathrm{Hom}_S(F_i,S(-g))$ is equipped by a second action namely every $\phi: F_i\rightarrow S(-g)$ is acted naturally by $G$ in terms of $\phi\mapsto \phi^\sigma=\sigma^{-1} \phi \sigma$. 
How are the two actions related?
\begin{lemma}
\label{star-dual-action}
Denote by $\star$ the action of $G$ on $F_i$ induced by taking the $S(-g)$-dual. The standard and the $\star$-actions are connected in terms of an automorphism $\psi_i$ of $G$, that is for all $v\in F_i$ $g\star v= \psi_i(g) v$. 
\end{lemma}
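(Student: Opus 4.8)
The plan is to reduce the statement to Proposition~\ref{prop:two-actions-by-iso} by pushing both actions down to the space $H^0(X,\Omega_X)$ of holomorphic differentials, and then to lift the resulting automorphism back up through the minimal free resolution, exploiting that the standard action on each $F_i$ is determined by its reduction modulo $\mathfrak{m}$.

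First I would check that the $\star$-action is itself induced by automorphisms of $X$ preserving the canonical ideal. The standard action on $F_i$ is, by construction, the extension to $\mathbf{F}$ of the geometric action of $G=\mathrm{Aut}(X)$ on $H^0(X,\Omega_X)$, and it preserves $I_X$. For the $\star$-action I would use that $S(-g)\cong\wedge^{g-1}\Omega_X$ is a genuine $G$-module, as recalled from \cite[th. 3.1]{MR0274461}; hence the dual functor $\mathrm{Hom}_S(-,S(-g))$ carries the $G$-equivariant resolution $\mathbf{F}$ of $S_X$ to the $G$-equivariant minimal free resolution $\mathbf{F}^*$ of $\mathrm{Ext}_S^{g-2}(S_X,S(-g))\cong S_X(1)$. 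The Gorenstein self-duality identification of $\mathbf{F}^*$ with $\mathbf{F}(1)$ (up to the numbering reversal $F_i\leftrightarrow F_{g-2-i}$) is compatible with the differentials, so reading off the induced action on the graded piece of the generator module that is isomorphic to $H^0(X,\Omega_X)$, namely $S_X(1)_0=(S_X)_1$, produces a second $G$-action $\star$ on $H^0(X,\Omega_X)$ which again stabilises the quadrics cutting out $X$, i.e. preserves $I_X$.

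Second, I would invoke Proposition~\ref{prop:two-actions-by-iso}. Since $X$ is non-hyperelliptic and $p>2$, the faithfulness hypothesis of \cite[th. 3.2]{MR3361016} holds, so the two actions $\circ$ (standard) and $\star$ on $\mathrm{Sym}H^0(X,\Omega_X)$, both restricting to the canonical ideal, differ by an automorphism $\iota\colon G\to G$ with $g\star w=\iota(g)\circ w$ for every $w\in H^0(X,\Omega_X)$, and therefore for every $w$ in $S$ and in $S_X$. Finally, I would lift this identity through the resolution. By Proposition~\ref{prop:respect-degrees} the transition matrices of the standard action on each $F_i$ have entries in $k$, so the $G$-action on $F_i$ is a genuine graded representation, determined by its value on a minimal set of generators, that is by the action on $\mathrm{Tor}_i^S(k,S_X)=k\otimes F_i$. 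Because $\mathrm{Tor}_i^S(k,S_X)$ is a functor of the $G$-module $S_X$, the two $G$-actions on $S_X$ related by $\iota$ induce actions on each $F_i$ related by the same $\iota$; concretely $g\star v=\psi_i(g)v$ on $F_i$, where $\psi_i$ is the automorphism obtained from $\iota$ (with the bookkeeping inversion internal to the natural left $\mathrm{Hom}$-action $\phi\mapsto\sigma^{-1}\phi\sigma$ absorbed into $\psi_i$). That $\psi_i$ is a homomorphism, hence an automorphism, then follows from the cocycle computation preceding the lemma: once the transition matrices are $k$-valued, $G$ acts trivially on them and the cocycle relation $A(\sigma\tau)=A(\sigma)A(\tau)^\sigma$ degenerates to the homomorphism relation.

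The step I expect to be the main obstacle is the second half of the first paragraph: showing cleanly that the dual ($\star$) action descends to an honest action of $G$ on $H^0(X,\Omega_X)$ by automorphisms of $X$ that preserve $I_X$, rather than merely an abstract $G$-action on the dualised complex. This is precisely where the $G$-equivariance of the Gorenstein self-duality $\mathbf{F}^*\cong\mathbf{F}(1)$ and the identification of $S(-g)$ with $\wedge^{g-1}\Omega_X$ are needed; once these are in hand, Proposition~\ref{prop:two-actions-by-iso} together with the rigidity of the minimal free resolution furnish $\psi_i$.
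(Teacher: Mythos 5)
Your reduction is the same one the paper uses: manufacture from the $S(-g)$-dual a second action of $G$ on $H^0(X,\Omega_X)$ that still preserves $I_X$, feed the pair of actions into Proposition \ref{prop:two-actions-by-iso}, and carry the resulting automorphism of $G$ back to $F_i$. The only real difference in the first half is how the dual action reaches the differentials: you read it off the augmentation $S_X(1)_0=(S_X)_1$ of the dualized resolution, whereas the paper re-applies the extension construction of section \ref{sec:actionOnFreeResolution} to the $\star$-action on $F_{g-2-i}$, pushing it along the remaining modules $F_j$, $j\geq g-2-i$, until it reaches the last rank-one term of the resolution, which is identified with the (twisted) module of differentials. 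These two devices accomplish the same thing, and your flagged ``main obstacle'' (equivariance of the Gorenstein duality and of the identification $S(-g)\cong\wedge^{g-1}\Omega_X$) is treated at the same level of detail in the paper itself.

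The genuine problem is your back-lifting step. You invoke Proposition \ref{prop:respect-degrees} to get that the transition matrices of the action on $F_i$ are $k$-valued, hence that the action is determined by $\mathrm{Tor}_i^S(k,S_X)=k\otimes F_i$ together with the action on $S$. Inside this paper that is circular: the proof of Proposition \ref{prop:respect-degrees} explicitly cites Lemma \ref{star-dual-action} (it is precisely how the coefficients $\alpha^{(i)}_{\nu,r}(\sigma)$ pairing generators of different degrees are shown to vanish), so the lemma must be proved before, and independently of, that proposition. Nor is the dependence harmless: before Proposition \ref{prop:respect-degrees} is available, the matrix of $\sigma$ on $F_i$ in eq. (\ref{representation-matrix}) is only known to be block upper-triangular with off-diagonal blocks having entries in powers of $\mathfrak{m}$, so two actions that agree modulo $\mathfrak{m}$ (i.e. on Tor) and agree on $S$ need not agree on $F_i$ --- which is exactly the rigidity your argument silently needs. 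To fit the paper's logical order you must instead conclude as the paper does, staying inside the transfer construction: the automorphism furnished by Proposition \ref{prop:two-actions-by-iso} relates the two actions on the differentials at the end of the chain of modules $F_j$, $j\geq g-2-i$, along which the $\star$-action was extended, and this relation is then asserted for $F_{g-2-i}$ (hence for $F_i$) without any appeal to $k$-rationality; alternatively, you would have to supply an independent proof of your determination-by-generators claim that does not pass through Proposition \ref{prop:respect-degrees}.
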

\begin{proof}
Assume that $i \leq g-2-i$. Consider the standard action of $G$ on the free resolution $\mathbf{F}$. The module $F_{g-2-i}$ obtains a new action $g\star v$ for $g\in G, v\in F_i$. By \ref{sec:actionOnFreeResolution}
this $\star$ action is transfered to an action on all $F_j$ for $j\geq g-2-i$, including the final term $F_{g-2}$ which is isomorphic to $S(-1)$. This gives us two actions on $H^0(X,\Omega_X)$ which satisfy the requirements of proposition \ref{prop:two-actions-by-iso}. The desired result follows.  
\end{proof}

\begin{proposition}
\label{prop:respect-degrees}
Under the faithful action requirement we have that all automorphisms $\sigma\in G$ send the direct summand
 $S(-j)^{\beta_{i,j}}$ of $F_i$ to itself, that is the  representation matrix in eq. (\ref{representation-matrix}) is block diagonal. 
\end{proposition}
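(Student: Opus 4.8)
The plan is to show that every off-diagonal block $A_{\kappa,\lambda}(\sigma)$ with $\kappa<\lambda$ in the matrix of eq. (\ref{representation-matrix}) can be cleared, equivalently that the short exact sequences of $G$-modules coming from the degree filtration of each $F_i$ split $G$-equivariantly. First I would record the one automatic piece: since $\sigma$ preserves degrees, the lowest graded summand $S(-d_{i,1})^{b_{i,1}}=S\cdot (F_i)_{d_{i,1}}$ is generated by $(F_i)_{d_{i,1}}$ and is therefore $G$-stable, so by induction on the number $r_i$ of distinct generator degrees it suffices to split off the top summand at each stage. For the canonical resolution the Castelnuovo--Mumford regularity is $3$, so each $F_i$ is generated in at most the two consecutive degrees $i+1,i+2$; hence there is a single off-diagonal block $A_{1,2}(\sigma)$, whose entries are linear forms of degree $d_{i,2}-d_{i,1}=1$, and the whole statement reduces to showing that this block can be removed.

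Next I would reinterpret the obstruction cohomologically. With diagonal blocks $A_1(\sigma),A_2(\sigma)\in\mathrm{GL}(b_{i,\nu},k)$, the cocycle relation $A(\sigma\tau)=A(\sigma)A(\tau)^\sigma$ restricted to the $(1,2)$-corner reads
\[
A_{1,2}(\sigma\tau)=A_1(\sigma)\,\sigma\!\big(A_{1,2}(\tau)\big)+A_{1,2}(\sigma)\,A_2(\tau),
\]
so $\sigma\mapsto A_{1,2}(\sigma)$ is a $1$-cocycle with values in the $k[G]$-module $W=\mathrm{Hom}_k(T_2,T_1)\otimes_k H^0(X,\Omega_X)$, where $T_\nu=\mathrm{Tor}_i^S(k,S_X)_{d_{i,\nu}}$, and a change of the top generators $e_{i,2,\mu}\mapsto e_{i,2,\mu}+\sum Q_{\mu\nu}e_{i,1,\nu}$ by linear forms alters $A_{1,2}$ by a coboundary. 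Thus clearing the block is exactly the vanishing of a class $[A_{1,2}]\in H^1(G,W)$. When $p\nmid|G|$ this is Maschke's theorem and gives nothing beyond the Maschke-type observation already noted; the real content of the proposition is the modular case.

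For the general case I would feed in the Gorenstein self-duality. By Lemma \ref{star-dual-action} the $S(-g)$-dual ($\star$) action on $F_i$ equals the standard action precomposed with an automorphism $\psi_i$ of $G$, and the identification $F_i\cong\mathrm{Hom}_S(F_{g-2-i},S(-g))$ (up to the shift coming from $\mathbf F^*$ resolving $S_X(1)$, and up to the one-dimensional character $\chi=\det\rho$ of $S(-g)$) turns the standard action on $F_{g-2-i}$ into the $\star$-action by inverse--transpose. Comparing the $(1,2)$-corners of the two descriptions yields, in the self-dual range $i=g-2-i$, the identity
\[
A_{1,2}(\psi_i\sigma)=-\chi(\sigma)\,\big(A_2(\sigma)^{-1}\big)^{t}A_{1,2}(\sigma)^{t}\big(A_1(\sigma)^{-1}\big)^{t},
\]
which says that $[A_{1,2}]$ is carried to its own negative under the involution of $H^1(G,W)$ induced by $\psi_i$ and $\chi$. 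Hence $2[A_{1,2}]=0$, and since $p\neq 2$ we get $[A_{1,2}]=0$; the remaining $F_i$ with $i\neq g-2-i$ are handled in dual pairs, the same identification matching the class for $F_i$ with that for $F_{g-2-i}$ so that a simultaneous equivariant choice of generators clears both. Choosing generators realizing these coboundaries makes every $A_{1,2}(\sigma)=0$, i.e. each $S(-j)^{\beta_{i,j}}$ is $\sigma$-stable and the matrix of eq. (\ref{representation-matrix}) is block diagonal.

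The hard part will be this modular step: making precise that the Gorenstein duality descends to an anti-involution on the relevant $H^1$, so that the obstruction class is $2$-torsion. This is exactly where the faithfulness hypothesis enters, through Proposition \ref{prop:two-actions-by-iso} and Lemma \ref{star-dual-action}, which produce the automorphism $\psi_i$, and where $p\neq 2$ is genuinely used; by contrast the coprime case is the soft Maschke argument and carries no real difficulty.
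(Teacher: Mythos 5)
Your setup is fine: the reduction to at most two consecutive generator degrees via regularity, the observation that the lowest-degree summand is automatically $G$-stable, and the cohomological reformulation are all correct. The corner relation $A_{1,2}(\sigma\tau)=A_1(\sigma)\sigma\bigl(A_{1,2}(\tau)\bigr)+A_{1,2}(\sigma)A_2(\tau)$ does exhibit (after normalizing, say $c(\sigma)=A_{1,2}(\sigma)A_2(\sigma)^{-1}$) a $1$-cocycle with values in $W=\mathrm{Hom}_k(T_2,T_1)\otimes_k H^0(X,\Omega_X)$, and a change of the top generators by linear forms changes it by a coboundary, so the obstruction is indeed a class in $H^1(G,W)$, which vanishes when $p\nmid |G|$. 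The genuine gap is in the modular step, which you yourself flag as the heart of the matter. The duality identity you derive has the shape $\Psi([A_{1,2}])=-D([A_{1,2}])$, where $\Psi$ is pullback along $\psi_i$ and $D$ is the isomorphism on $H^1$ induced by inverse--transpose, $\chi$, and the identification $F_i\cong\mathrm{Hom}_S(F_{g-2-i},S(-g))$. This is an \emph{anti-symmetry relating two different operators applied to the class}, not the statement that one involution both fixes and negates it. To deduce $2[A_{1,2}]=0$ you would additionally need $D([A_{1,2}])=[A_{1,2}]$ (or $\Psi=\mathrm{id}$ and $D=\mathrm{id}$), none of which is established. In fact, if you apply your identity a second time (replace $\sigma$ by $\psi_i(\sigma)$ and substitute), the two minus signs cancel and you obtain a relation that any class, zero or not, can satisfy: the anti-symmetry is self-consistent and cannot force vanishing. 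The same objection defeats the treatment of the non-self-dual pairs: identifying the class of $F_i$ with minus the dual of the class of $F_{g-2-i}$ transports one obstruction to the other but gives no mechanism to clear either; ``a simultaneous equivariant choice of generators'' is precisely what remains to be produced.

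For comparison, the paper's proof is not cohomological. It works directly with the dual basis $\phi_\mu$ of $\mathrm{Hom}_S(F_i,S(-g))\cong F_{g-2-i}$, uses that dualizing \emph{reverses} the ordering of generator degrees (a generator of maximal degree of $F_i$ becomes an element of minimal degree of $F_{g-2-i}$), invokes Lemma \ref{star-dual-action} to say the dual action is the standard action relabelled by the automorphism $\psi_i$ of $G$, and then uses the fact that a degree-preserving semilinear action must send a minimal-degree element to a $k$-linear combination of minimal-degree generators, killing the offending coefficients one dual basis element at a time. So even granting your first two steps, your modular argument is a genuinely different route from the paper's, and in its present form it does not close: you need a second, independent relation (invariance, not just anti-invariance, of the obstruction class under the duality operator) before the hypothesis $p\neq 2$ can do any work.
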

\begin{proof}
Consider $F_i=\bigoplus_{\nu=1}^{r_i} M_{i,\nu} S$, where $M_{i,1_i},\ldots, Μ_{i,r_i}$ are assumed to 
be  minimal generators of $F_i$ with descending degrees $a_{i,\nu}=\deg(m_{i,\nu})$, $1\leq \nu \leq r_i$.  
The action of an element $\sigma$ is given in terms of the matrix $A(\sigma)$ given in equation (\ref{representation-matrix}). The element $\phi\in \mathrm{Hom}_S(F_i, S(-g))$ is sent to 
\begin{align*}
h:\mathrm{Hom}_S(F_i, S(-g)) & 
\stackrel{\cong}{\longrightarrow}
 F_{g-2-i} 
\\
\phi 
  & \longmapsto  
\big(
\phi(M_{i,1}), \ldots,\phi(M_{i,r_i})
\big)
\end{align*}
Each $\phi(M_{i,\nu})$ can be considered as an element in $S(-g-1+\deg(m_{i,\nu}))$ inside $F_{g-2-i}$.
Observe that the element $\phi \in \mathrm{Hom}_S(F_i, S(-g))$ is known if we know all $\phi(M_i,\nu)$ for 
$1\leq \nu \leq r_i$. From now on we will identify  such an  element $\phi$ as a $r_i$-tuple
  $\big(\phi(M_i,\nu)\big)_{1\leq \nu \leq r_i}$. 
 We can consider as a basis of $\mathrm{Hom}(F_i,S(-g))$ the morphisms $\phi_\mu$ given by 
\[
\phi_\mu(M_j)=\delta_{\mu,j}\cdot E,  
\]
where $E$ is a basis element of degree $g$ of the rank 1 module $S(-g)\cong S\cdot E$. 
This is a different basis than the basis $M_{g-2-i,\nu}$, $1\leq n \leq r_{g-2-i}$ of $F_{g-2-i}$ we have already introduced.

Recall that if $A,B$ are $G$-modules, then there is an natural action on $\mathrm{Hom}(A,B)$, sending $\phi\in \mathrm{Hom}(A,B)$ to $^\sigma\phi$ which is the map 
\[
^\sigma\phi:A \ni a \mapsto \sigma \phi(\sigma^{-1}a) 
\]

We have also a second action on the module $F_{g-2-i}$. We compute $^\sigma\phi(M_{i,\nu})$ for all base elements $M_{i,\nu}$ in order to descrive $^\sigma\phi	$:
\begin{align*}
\sigma \left(
\phi( \sigma^{-1}  M_{i,\nu})
\right)_{
1\leq \nu \leq  \kappa 
	}
&=
\left(
\sum_{\mu=1}^{r_i}
\sigma
\big(\alpha_{\mu,\nu}(\sigma^{-1}) 
\big)
\sigma\phi(M_{i,\mu})
 \right)_{
1\leq \nu \leq  r_i
	}
\\
& =
\left(
\sum_{\mu=1}^{r_i}
\sigma\big(\alpha_{\mu,\nu}(\sigma)\big) \chi(\sigma)\phi(M_{i,\mu})
 \right)_{
1\leq \nu \leq  r_i
	}
\end{align*}
where in the last equation we have used the fact that 
$\phi(M_i)$ are in rank one $G$-module $S(-g)\cong \wedge^{g-1} \Omega^{1}_X$ hence the action of $\sigma\in G$ is given by multiplication by $\chi(\sigma)$, where $\chi(\sigma)$ is an invertible element is $S$.

In order to simplify the notation consider $i$ fixed, and denote $M_\nu=M_{i,\nu}$, $r=r_i$, $a_{i,j}=a_j$. 

According to eq. (\ref{BettiSymmetry}) if $M_j$ has degree $a_j$ then the element $\phi_j$ has degree $g+1-a_j$. 
Assume that $M_r$ has maximal degree $a_r$. Then, $\phi_r$ has minimal degree.
Moreover, in order to describe $^\sigma\phi_r$ we have to consider
the tuple 
$(^\sigma\phi_r(M_1),\ldots,^\sigma\phi_r(M_r))$. We have   
\begin{align*}
\big(
^\sigma\phi_r(M_\nu)
\big)_{1\leq \nu \leq r}
&=
\left(
\sum_{\mu=1}^r 
\sigma
\big(\alpha_{\mu,\nu}^{(i)}(\sigma^{-1}) 
\big)
\chi(\sigma) \phi_r(M_\mu)
\right)_{
1\leq \nu \leq  r
	}
	\\
	&=
	\left(
\sigma
\big(
\alpha_{r,\nu}^{(i)}(\sigma^{-1})
\big)
 \chi(\sigma) E
\right)_{
1\leq \nu \leq  r
	}
\end{align*}
and we finally conclude that
\[
^{\sigma}\phi_r
=
\sum_{\nu=1}^r 
\sigma^{-1}
\big(
\alpha_{r,\nu}^{(i)}(\sigma^{-1})
\big)
 \chi(\sigma)
\phi_\nu.
\]
In this way every element $x\in F_{g-2-i}$ is acted on by $\sigma$ in terms of the action 
\[
\sigma \star x=  h\big(^\sigma h^{-1}(x) \big).
\]
On the other hand the elements $h(\phi_r)$ are in $F_{g-2-i}$ and by lemma \ref{star-dual-action}
there is an element $\sigma'\in G$ such that 
\[
\sigma'h(\phi_r)=
\sum_{\nu=1}^r
\alpha_{\nu,r}^{(g-2-i)}(\sigma') h(\phi_\nu). 
\]
Since the element $\phi_\nu$ has maximal degree among generators of $F_i$ the element $h(\phi_r)$ has minimal degree. This means that all coefficients 
\[
\alpha_{\nu,r}^{(g-2-i)}(\sigma')=
\sigma
\big(
\alpha_{r,\nu}^{(i)}(\sigma^{-1})
\big)
 \chi(\sigma)
\]
are zero for all $\nu$ such that $\deg m_\nu < \deg \mu_r$. Therefore 
 all coefficients $a_{\nu,r}^{(i)}(\sigma)$
for $\nu$ such that $\deg m_\nu < \deg m_r$ are zero. This holds for all $\sigma\in G$.  
By considering in this way all elements $\phi_{r-1},\phi_{r-2},\ldots,\phi_1$, which might have greater degree than the degree of $\phi_r$ the result follows. 
\end{proof}

\section{Representations on the free resolution}

Each $S$-module $F_i$ in the minimal free resolution can be seen as a series of representations of the group $G$. Indeed, the modules $F_i$ are graded and there is an action of $G$ on each graded part, given by representations
\[
\rho_{i,d}:G \rightarrow \mathrm{GL}\big( F_{i,d}\big),
\]
where $F_{i,d}$ is the degree $d$ part of the $S$-module $F_i$. 
In equation (\ref{TorRep}) we have already considered the representation on $\mathrm{GL}(\mathrm{Tor}_i^S(k,S_X))$ which is $\rho_{i,d}$. 
Proposition \ref{prop:respect-degrees} shows us that there is a decomposition 
\[
\mathrm{Tor}_i^S(k,S_X)=\bigoplus_{j \in \mathbb{Z}}
\mathrm{Tor}_i^S(k,S_X)_j,
\]
where $\mathrm{Tor}_i^S(k,S_X)_j$ is the $k$-vector space generated by generators of $F_i$ that have  degree $j$. This is a vector space of dimension $b_{i,j}$.  

Denote by $\mathrm{Ind}(G)$ the set of isomorphism classes of indecomposable $k[G]$-modules. 
If $k$ is of characteristic $p>0$ and $G$ has no-cyclic $p$-Sylow subgroup then the set $\mathrm{Ind}(G)$ is infinite, see \cite[p.26]{MR2267317}. 
Suppose that each $\mathrm{Tor}_i^S(k,S_X)_j$ admits the following decomposition in terms of $U\in \mathrm{Ind}(G)$:
\[
\mathrm{Tor}_i^S(k,S_X)_j=\bigoplus_{U\in \mathrm{Ind}(G)}
 a_{i,j,U} U \text{ where } a_{i,j,U} \in \Z.
\]
We obviously have that 
\[
b_{i,j}=\sum_{U \in \mathrm{Ind}(G)} a_{i,j,U} 
\dim_k U. 
\]
The $G$-structure of $F_i$ is given by 
\[
\mathrm{Tor}_i^S(k,S_X) \otimes S,
\]
that is the $G$-module structure of $F_{i,d}$ is given by 
\[
F_{i,d}=\bigoplus_{d\in \mathbb{Z}} 
\bigoplus_{j\in \mathbb{Z}}
\mathrm{Tor}_i^S(k,S_X)_{d-j} \otimes S_j.
\]
Notice that structure of $S_d$ can be expressed in terms of the decomposition of $H^0(X,\Omega_X)$ in terms of indecomposable modules using the operations in the Grothendieck ring of $G$.

\section*{Appendix}
In what follows we will study the canonical ideal of the Fermat curve, for $n\geq 6$,following the method developed in \cite{1905.05545} and we will prove proposition \ref{canGenerators}.

Observe that the holomorphic differentials given in eq. (\ref{Hol-Fermat}) are in 1-1 correspondence with the elements of the set
$\mathbf{A}=\{ (i,j): 0 \leq i+j \leq n-3\}\subset \mathbb{N}^2$.
First we introduce the following term order on the polynomial algebra $S:=\mathrm{Sym} H^0(X,\Omega_X)$.
\begin{definition}\label{term-order}
Choose any term order $\prec_t$ for the variables $\left\{\omega_{N,\mu}:(N,\mu)\in A \right\}$ and define the term order $\prec$ on the monomials of $S$ as follows:
\begin{equation}\label{term-order1}
\omega_{N_1,\mu_1}\omega_{N_2,\mu_2}\cdots \omega_{N_d,\mu_d}\prec \omega_{N'_1,\mu'_1}\omega_{N'_2,\mu'_2}\cdots \omega_{N'_s,\mu'_s}\text{ if and only if}
\end{equation}
\begin{itemize}
\item $d<s$ or\\
\item $d=s$ and $\sum \mu_i >\sum \mu'_i$ or\\
\item $d=s$ and $\sum \mu_i =\sum \mu'_i$  and $\sum N_i <\sum N'_i$\\
\item $d=s$ and $\sum \mu_i =\sum \mu'_i$  and $\sum N_i =\sum N'_i$ and
\[\omega_{N_1,\mu_1}\omega_{N_2,\mu_2}\cdots \omega_{N_d,\mu_d}\prec_t \omega_{N'_1,\mu'_1}\omega_{N'_2,\mu'_2}\cdots \omega_{N'_s,\mu'_s}.\]
\end{itemize}
\end{definition} 

T
\begin{lemma}
\label{count-sum}
The number of natural numbers $0 \leq i,j$ such that 
$0 \leq i+j \leq E\in \mathbb{N}$ equals $(E+1)(E+2)/2$.
\end{lemma}
\begin{proof}
Evaluate $\sum_{i=0}^{E} \sum_{j=0}^{E-i} 1$. 
\end{proof}
 We will use the following lemma
\begin{lemma}\label{lemma 1} 
Let $J$ be the ideal generated by the elements $G_1,G_2$ and let $I$ be the canonical ideal. Assume that the cannonical is generated by elements of degree $2$.  If 
$\dim_L \left(S/\init (J)\right)_2\leq 3(g-1)$, then $I= J$.
\end{lemma}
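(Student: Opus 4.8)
The plan is to prove this by a dimension-count argument combined with the fact that the initial ideal of $J$ is contained in the initial ideal of $I$. First I would observe that since $G_1, G_2 \subset I$ by Proposition \ref{canGenerators} (the relations in $G_1, G_2$ are genuine quadratic relations among the $\omega_{i,j}$), we have the ideal inclusion $J \subseteq I$. Consequently, for any term order $\prec$, the inclusion $\init(J) \subseteq \init(I)$ holds, which gives the surjection $\left(S/\init(J)\right)_d \twoheadrightarrow \left(S/\init(I)\right)_d$ in every degree $d$, and in particular a dimension inequality $\dim_L \left(S/\init(J)\right)_d \geq \dim_L \left(S/\init(I)\right)_d$.

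Next I would compute the right-hand side in degree $2$. Since passing to the initial ideal preserves Hilbert functions, $\dim_L \left(S/\init(I)\right)_2 = \dim_L \left(S/I\right)_2$. By the Noether--Enriques--Petri exact sequence stated at the start of the excerpt, $\left(S/I_X\right)_d \cong H^0(X,\Omega_X^{\otimes d})$, so $\dim_L\left(S/I\right)_2 = \dim_L H^0(X,\Omega_X^{\otimes 2})$. Riemann--Roch for the bicanonical bundle $\Omega_X^{\otimes 2}$, whose degree is $4(g-1) > 2g-2$, yields $h^0(X,\Omega_X^{\otimes 2}) = 2(2g-2) - g + 1 = 3g - 3 = 3(g-1)$. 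Hence $\dim_L\left(S/\init(I)\right)_2 = 3(g-1)$ exactly.

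Combining the two steps, the hypothesis $\dim_L\left(S/\init(J)\right)_2 \leq 3(g-1) = \dim_L\left(S/\init(I)\right)_2$ together with the reverse inequality from $\init(J)\subseteq\init(I)$ forces equality $\dim_L\left(S/\init(J)\right)_2 = \dim_L\left(S/\init(I)\right)_2$. Since $\init(J)\subseteq\init(I)$ and the two quotients agree in dimension in degree $2$ (and both ideals are generated in degree $2$ under our standing assumption), the surjection in degree $2$ is an isomorphism, so $\init(J)_2 = \init(I)_2$. Because $I$ is generated in degree $2$, the initial ideal $\init(I)$ is generated by $\init(I)_2$ together with possibly higher-degree standard monomials; the key point is that matching Hilbert functions in the generating degree propagates to all degrees.

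The main obstacle, and the step requiring the most care, is upgrading the degree-$2$ equality of initial ideals to the full equality $\init(J)=\init(I)$, and thence to $J = I$. The clean way to close this is to note that $J \subseteq I$ gives $\init(J) \subseteq \init(I)$, so $\dim_L(S/\init(J))_d \geq \dim_L(S/\init(I))_d$ for all $d$; conversely, once we know the generators of $I$ (all in degree $2$) lie in $J$, a Gr\"obner-basis/standard-monomial argument shows the degree-$2$ leading terms of $J$ already cut out the same Hilbert function as $I$ in every degree, because the canonical ring $\bigoplus_d H^0(X,\Omega_X^{\otimes d})$ is generated in degree $1$ (again by Noether--Enriques--Petri). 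Thus the Hilbert functions of $S/J$ and $S/I$ coincide in all degrees; combined with $J\subseteq I$ this forces $J = I$. I would expect the remainder of the appendix to verify the hypothesis $\dim_L(S/\init(J))_2 \leq 3(g-1)$ by explicitly counting the standard monomials of degree $2$ not lying in $\init(J)$, using the term order of Definition \ref{term-order} and the explicit shape of $G_1, G_2$.
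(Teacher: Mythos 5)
Your argument is correct, but there is no internal proof to compare it against: the paper states this lemma bare (``We will use the following lemma'') and imports it from \cite{1905.05545}, where the analogous statement is proved by essentially the dimension count you give. Your outline is the standard one and it does close: $J\subseteq I$ because the elements of $G_1,G_2$ are honest quadratic relations among the $\omega_{i,j}$ --- though you should justify this by direct substitution ($\omega_{i,j}=x^iy^j\omega$, together with the Fermat equation for $G_2$) rather than by citing Proposition \ref{canGenerators}, since that proposition is precisely what this lemma is used to prove, so the citation as written is circular; then $\init(J)\subseteq\init(I)$, and Macaulay's theorem (initial ideals preserve Hilbert functions) plus the Petri sequence and Riemann--Roch give $\dim_L\left(S/\init(I)\right)_2=h^0(X,\Omega_X^{\otimes 2})=3(g-1)$, so the hypothesis forces $\init(J)_2=\init(I)_2$.

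Two criticisms of your endgame, both fixable in one line each. First, the step you phrase conditionally (``once we know the generators of $I$ (all in degree 2) lie in $J$'') is never actually derived; it follows by invoking Macaulay's theorem once more, this time for $J$: $\dim_L J_2=\dim_L\init(J)_2=\dim_L\init(I)_2=\dim_L I_2$, which combined with $J_2\subseteq I_2$ gives $J_2=I_2$. Second, once you have $J_2=I_2$, the detour through Hilbert functions in all degrees is superfluous, and the slogan you lean on (``matching Hilbert functions in the generating degree propagates to all degrees'') is false for general nested ideals; what finishes the proof is simply the standing assumption that $I$ is generated in degree $2$: $I=S\cdot I_2=S\cdot J_2\subseteq J\subseteq I$, hence $I=J$. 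With those two sentences made explicit, your proposal is a complete proof --- indeed the one the paper implicitly relies on.
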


We extend the correspondence between the variables $\omega_{i,j}$ and the points of $\mathbf{A}$ to a correspondence between monomials in $S$ of standard degree $2$ and points of the Minkowski sum of  $\mathbf{A}$ with itself, defined as
\begin{equation}\label{Minkowski-def}
\mathbf{A}+\mathbf{A}=\{(i+i',j+j')\;|\;(i,j),(i',j')\in \mathbf{A}\}\subseteq\mathbb{N}^2.
\end{equation}

\begin{proposition}\label{mdeg-characterization}
Let $\mathbf{A}$ be the set of exponents of the basis of holomorphic differentials, and let $\mathbf{A}+\mathbf{A}$ denote the Minkowski sum of $\mathbf{A}$ with itself, as defined in (\ref{Minkowski-def}). Then
\[(\rho,T)\in 
\mathbf{A}+\mathbf{A}
\Leftrightarrow\exists \;\omega_{i,j}\omega_{i',j'}\in S\text{ such that } \mdeg(\omega_{i,j}\omega_{i',j'})=(2,\rho,T).\]
\end{proposition}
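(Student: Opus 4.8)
The plan is to prove both implications directly from the definition of the Minkowski sum $\mathbf{A}+\mathbf{A}$ in (\ref{Minkowski-def}) and from the multidegree $\mdeg$ attached to the term order of Definition \ref{term-order}. The essential observation is that, for a monomial $\omega_{i,j}\omega_{i',j'}$ of standard degree $2$, the grading underlying the term order records precisely the triple $\left(2,\, i+i',\, j+j'\right)$, the first entry being the standard degree, the second the sum $\sum N$ of the first indices and the third the sum $\sum \mu$ of the second indices. Hence $\mdeg(\omega_{i,j}\omega_{i',j'})=(2,\rho,T)$ forces $\rho=i+i'$ and $T=j+j'$, so that the pair $(\rho,T)$ is exactly the coordinatewise sum of the two index points $(i,j),(i',j')\in\mathbb{N}^2$. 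With this identification the proposition becomes a translation of the defining property of the Minkowski sum into the language of multidegrees.

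For the forward implication I would begin with a point $(\rho,T)\in\mathbf{A}+\mathbf{A}$. By (\ref{Minkowski-def}) there exist index points $(i,j),(i',j')\in\mathbf{A}$ with $\rho=i+i'$ and $T=j+j'$. Under the correspondence between $\mathbf{A}$ and the basis of holomorphic differentials in (\ref{Hol-Fermat}) these points correspond to the variables $\omega_{i,j}$ and $\omega_{i',j'}$ of $S$, and their product is a monomial of standard degree $2$ whose multidegree, by the computation above, equals $(2,\rho,T)$. This supplies the required witness.

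For the reverse implication I would take a monomial $\omega_{i,j}\omega_{i',j'}\in S$ with $\mdeg(\omega_{i,j}\omega_{i',j'})=(2,\rho,T)$. Since the variables of $S$ are indexed by the points of $\mathbf{A}$, both $(i,j)$ and $(i',j')$ lie in $\mathbf{A}$, and reading off the second and third entries of the multidegree gives $\rho=i+i'$ and $T=j+j'$. Therefore $(\rho,T)=(i+i',\,j+j')$ is a sum of two elements of $\mathbf{A}$ and so lies in $\mathbf{A}+\mathbf{A}$ by definition.

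I do not expect a genuine obstacle: the statement is a bookkeeping identity that records the compatibility between the monomial correspondence and the Minkowski sum. The only point needing care is to make explicit that $\mdeg$ extracts the pair $\bigl(\sum N,\sum\mu\bigr)$ from the $\mathbb{N}^3$-grading underlying the term order of Definition \ref{term-order}, so that the identity $\mdeg(\omega_{i,j}\omega_{i',j'})=(2,\,i+i',\,j+j')$ is justified rather than merely asserted; once this is settled, both implications are immediate.
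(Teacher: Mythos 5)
Your proof is correct, and it follows exactly the route the paper intends: the paper in fact states this proposition without proof, treating it as immediate bookkeeping from the correspondence between the variables $\omega_{i,j}$ and the points of $\mathbf{A}$, which is precisely the content of your argument. Your one added value is making explicit that $\mdeg(\omega_{i,j}\omega_{i',j'})=(2,\,i+i',\,j+j')$, i.e.\ that the multidegree records the standard degree together with the two index sums — a point the paper (following \cite{1905.05545}) leaves implicit.
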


For each $n\in\mathbb{N}$ we write $\mathbb{T}^n$ for the set of monomials of degree $n$ in $S$ and  proceed with the characterization of monomials that do not appear as leading terms of binomials in
  $G_1\subseteq J$.
\begin{proposition}\label{proposition 2} 
Let $\sigma$ be the map of sets 
\begin{eqnarray*}
\sigma:\mathbf{A}+\mathbf{A}
&\rightarrow& \mathbb{T}^2\\
(\rho,T)&\mapsto& \min_{\prec}\{\omega_{i,j}\omega_{i',j'}\in \mathbb{T}^2\;|\; (\rho,T)=(i+i',j+j')\}
\end{eqnarray*}
Then
\[\sigma(\mathbf{A}+\mathbf{A})=
\{\omega_{i,j}\omega_{i',j'}\in \mathbb{T}^2\;|\;\omega_{i,j}\cdot\omega_{i',j'}\neq\init (f),\;\forall\; f\in G_1
\}\]
\end{proposition}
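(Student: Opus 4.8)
The plan is to reduce the statement to a fiberwise analysis of the Minkowski-sum map and to show that $\sigma$ selects precisely the $\prec$-minimal monomial in each fiber, while the monomials that fail to be an initial term of some $f\in G_1$ are exactly these fiber minima. By Proposition \ref{mdeg-characterization}, sending a degree-two monomial $\omega_{i,j}\omega_{i',j'}$ to $(\rho,T)=(i+i',j+j')$ defines a surjection $\mathbb{T}^2 \to \mathbf{A}+\mathbf{A}$ whose fibers $\Phi(\rho,T)=\{\omega_{i,j}\omega_{i',j'}\in\mathbb{T}^2 : i+i'=\rho,\ j+j'=T\}$ are nonempty and partition $\mathbb{T}^2$. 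I would first observe, directly from (\ref{G1}), that every $f\in G_1$ is a difference $m_1-m_2$ of two distinct monomials lying in a common fiber $\Phi(\rho,T)$, and conversely that any two distinct monomials of a common fiber give such an $f$; thus the binomials of $G_1$ are exactly the within-fiber differences.

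Next I would compute the initial terms. By definition $\init(m_1-m_2)$ is the $\prec$-largest of $m_1,m_2$. Restricted to a single fiber $\Phi(\rho,T)$, all monomials share the same degree $d=2$, the same $\sum N_i=\rho$ and the same $\sum\mu_i=T$, so the first three clauses of Definition \ref{term-order} are inconclusive and $\prec$ coincides there with the tie-breaking order $\prec_t$; in particular $\prec$ restricts to a total order on $\Phi(\rho,T)$ and the minimum is unique. Writing $m_0$ for this $\prec$-minimum, every other $m\in\Phi(\rho,T)$ satisfies $m\succ m_0$, hence $m=\init(m-m_0)$ occurs as the initial term of a $G_1$-binomial, whereas $m_0$ can never be an initial term, since for any $m\neq m_0$ in the fiber one has $\init(m_0-m)=m\neq m_0$. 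Therefore the monomials of $\Phi(\rho,T)$ occurring as some $\init(f)$, $f\in G_1$, are exactly those of $\Phi(\rho,T)\setminus\{m_0\}$, and the unique monomial of the fiber avoiding all initial terms is $m_0$.

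Finally I would assemble the two sides. Since $G_1$-binomials never mix fibers, a monomial of $\Phi(\rho,T)$ is an initial term of some $f\in G_1$ if and only if it is an initial term of a within-fiber binomial, so the right-hand side of the claimed identity is precisely $\bigcup_{(\rho,T)}\{m_0(\rho,T)\}$, the collection of fiber minima. On the other hand $\sigma(\rho,T)=\min_{\prec}\Phi(\rho,T)=m_0(\rho,T)$ by definition, so $\sigma(\mathbf{A}+\mathbf{A})$ is this same collection of fiber minima, giving the desired equality.

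The main obstacle I anticipate is the careful verification that $\prec$ restricts to a genuine total order on each fiber, so that a unique minimum exists: this is where the layered structure of Definition \ref{term-order} is essential, since only after the degree-, $\sum\mu$- and $\sum N$-clauses have all tied does the comparison fall to $\prec_t$, and one must check that these three coarse invariants are indeed constant on a fiber. A secondary point requiring care is confirming that no monomial can acquire initial-term status from a binomial of a different fiber; this follows from the defining conditions $i_1+i_2=i_3+i_4$, $j_1+j_2=j_3+j_4$ in (\ref{G1}), which force both monomials of any $f\in G_1$ into the same fiber.
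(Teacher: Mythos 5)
Your proof is correct. Note that the paper itself states Proposition \ref{proposition 2} without proof, deferring to the method of \cite{1905.05545}, so there is no written argument to compare against; your fiberwise analysis is exactly the argument that is implicitly being invoked. The three observations you isolate are the right ones and each checks out: the defining conditions of $G_1$ in (\ref{G1}) force both monomials of any binomial into a single fiber of the exponent-sum map $\mathbb{T}^2\rightarrow \mathbf{A}+\mathbf{A}$; within a fiber the degree, $\sum N_i$ and $\sum\mu_i$ clauses of Definition \ref{term-order} all tie, so $\prec$ restricts to the total order $\prec_t$ and the minimum $m_0$ is unique; and since $\init$ of a binomial is its $\prec$-larger monomial, the initial terms of $G_1$ arising from a given fiber are exactly the non-minimal monomials of that fiber, making both sides of the claimed identity equal to the set of fiber minima, which is $\sigma(\mathbf{A}+\mathbf{A})$ by definition. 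The only cosmetic point is that $G_1$ as written also contains the zero binomial (when the two monomial pairs coincide), for which $\init$ is undefined; your restriction to distinct monomials in a common fiber handles this implicitly and is the intended reading.
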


The above proposition gives a characterization of the monomials that do not appear as initial terms of elements of $G_1$.  However, some of these monomials appear as initial terms of polynomials in $G_2$:
\begin{proposition}\label{proposition 3}
Let
\[C=\{(\rho,b)\in \mathbf{A}+\mathbf{A}\;|\; 
\rho=n+a, 0\leq a+b \leq n-6, a,b\in \mathbb{N}\}
\]
Then
 \[\sigma(C)\subseteq \{\omega_{i,j}\omega_{i',j'}\in \mathbb{T}^2\;|\;\exists\;g\in G_2 \text{ such that }\omega_{i,j}\omega_{i',j'}=\init (g)\}
 \]
 Moreover $\#C=\#\sigma(C)=(n-5)(n-4)/2$.
\end{proposition}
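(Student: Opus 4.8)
The plan is to treat the inclusion---asserting that every monomial in $\sigma(C)$ equals the initial term $\init(g)$ of some $g\in G_2$---separately from the cardinality count $\#C=\#\sigma(C)=(n-5)(n-4)/2$, the latter being largely combinatorial once the former is in hand.

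First I would pin down the initial term of an arbitrary $g\in G_2$. A generator of the shape \eqref{G2} with parameters $a,b$ has three monomials whose multidegrees, in the sense of Proposition \ref{mdeg-characterization}, are $(n+a,b)$, $(a,n+b)$ and $(a,b)$; in particular they are pairwise distinct, so $g\neq 0$. By Definition \ref{term-order}, among monomials of equal standard degree the $\prec$-larger one is the one with the smaller $j$-sum, ties being broken in favour of the larger $i$-sum. Comparing the three multidegrees, the first monomial has the smallest $j$-sum (namely $b<n+b$) and, tying with the third on $j$-sum $b$, the strictly larger $i$-sum $n+a>a$; hence $\init(g)$ is always the first monomial, of multidegree $(n+a,b)$. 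I stress that this comparison uses only the multidegree, so it is insensitive to the particular way each term is written as a product of two basis differentials.

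For the inclusion, fix $(\rho,b)=(n+a,b)\in C$, so $0\le a+b\le n-6$, and put $m=\sigma(n+a,b)$. The idea is to produce a generator in $G_2$ whose first monomial is exactly $m$. Multiplying the Fermat relation $x^n+y^n+1=0$ by $x^a y^b\omega^2$ gives the identity of quadratic differentials
\[
x^{n+a}y^{b}\,\omega^2+x^{a}y^{n+b}\,\omega^2+x^{a}y^{b}\,\omega^2=0,
\]
which becomes an element of the form \eqref{G2} once each term is expressed as a product of two of the $\omega_{i,j}$. For the first term I choose the product realizing the minimum $m=\sigma(n+a,b)$, and the third term I write as $\omega_{a,b}\omega_{0,0}$. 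The only place the hypothesis is needed is the middle term $x^{a}y^{n+b}\omega^2$, whose total exponent is $a+(n+b)$: it splits into two differentials with indices in $\mathbf{A}$ precisely when $a+(n+b)\le 2(n-3)$, i.e.\ $a+b\le n-6$, and $\omega_{0,n-3}\,\omega_{a,b+3}$ is then an admissible split. All indices lie in $\mathbf{A}$, so the resulting $g$ genuinely belongs to $G_2$, and by the first step $\init(g)=m$. This gives $\sigma(C)\subseteq\{\init(g):g\in G_2\}$.

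Finally I would count. The map $(a,b)\mapsto(n+a,b)$ is injective, and the splits above show each image lies in $\mathbf{A}+\mathbf{A}$, so it is a bijection from $\{(a,b):a,b\ge 0,\ a+b\le n-6\}$ onto $C$; Lemma \ref{count-sum} with $E=n-6$ then yields $\#C=(n-5)(n-4)/2$. Since $\sigma(\rho,T)$ has multidegree $(\rho,T)$ by Proposition \ref{mdeg-characterization}, and a monomial determines its multidegree, the map $\sigma$ is injective; hence $\#\sigma(C)=\#C=(n-5)(n-4)/2$. The main obstacle I foresee is the bookkeeping in the third paragraph: checking that $a+b\le n-6$ is exactly the threshold making all three terms---especially the one of multidegree $(a,n+b)$---products of basis differentials, and confirming that the freedom in splitting the leading term lets me land on the minimal monomial $\sigma(n+a,b)$ without changing which monomial is $\prec$-largest.
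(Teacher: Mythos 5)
Your proof is correct and takes essentially the same route as the paper's: both rest on reading off the three multidegrees $(n+a,b)$, $(a,n+b)$, $(a,b)$ of a relation in $G_2$, noting that the term order of Definition \ref{term-order} forces the first to be the initial term, deriving the bound $a+b\le n-6$ from the requirement that the middle term split into two basis differentials, and counting via Lemma \ref{count-sum}. Your write-up simply supplies details the paper leaves implicit (the explicit splits such as $\omega_{0,n-3}\omega_{a,b+3}$, and the injectivity of $\sigma$), so no further comparison is needed.
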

\begin{proof}
By the form of the equations of $G_2$ we have that 
\[
i_1+i_2=n+a=\rho, j_1+j_2=b
\]
so 
\[
i_3+i_4=a=\rho-n, j_3+j_4=n+b, i_5+i_6=a=\rho-n, 
j_5+j_6=b=T
\]
We should have 
\begin{align*}
0 \leq a+b \leq n-6
\end{align*}
and by lemma \ref{count-sum} we have that the cardinality of $C$ equals $(n-5)(n-4)/2$. 
\end{proof}
We now compute that 
\[
3(g-1)-(\#(\mathbf{A}+\mathbf{A})-C)=
3\left(
g-1
\right)
-\frac{(2n-5)(2n-4)}2 + \frac{(n-5)(n-4)}2=0,
\]
so by lemma \ref{lemma 1} we have that $I=J$.

 \def\cprime{$'$}


\begin{thebibliography}{10}

\bibitem{alperin}
J.~L. Alperin.
\newblock {\em Local representation theory}, volume~11 of {\em Cambridge
  Studies in Advanced Mathematics}.
\newblock Cambridge University Press, Cambridge, 1986.
\newblock Modular representations as an introduction to the local
  representation theory of finite groups.

\bibitem{MR0274461}
Allen Altman and Steven Kleiman.
\newblock {\em Introduction to {G}rothendieck duality theory}.
\newblock Lecture Notes in Mathematics, Vol. 146. Springer-Verlag, Berlin-New
  York, 1970.

\bibitem{Antoniadis2017}
Jannis~A. Antoniadis and Aristides Kontogeorgis.
\newblock {\em Automorphisms of Curves}, pages 339--361.
\newblock Springer International Publishing, Cham, 2017.

\bibitem{MR2267317}
David~J. Benson.
\newblock {\em Modular representation theory}, volume 1081 of {\em Lecture
  Notes in Mathematics}.
\newblock Springer-Verlag, Berlin, 2006.
\newblock New trends and methods, Second printing of the 1984 original.

\bibitem{1803.10481}
Christian Bopp and Frank-Olaf Schreyer.
\newblock A version of green's conjecture in positive characteristic, 2018.

\bibitem{Magma1997}
Wieb Bosma, John Cannon, and Catherine Playoust.
\newblock The {M}agma algebra system {I}: {T}he user language.
\newblock {\em J. Symbolic Comput.}, 24(3-4):235--265, 1997.
\newblock Computational algebra and number theory (London, 1993).

\bibitem{MR3916740}
A.~Broughton, T.~Shaska, and A.~Wootton.
\newblock On automorphisms of algebraic curves.
\newblock In {\em Algebraic curves and their applications}, volume 724 of {\em
  Contemp. Math.}, pages 175--212. Amer. Math. Soc., Providence, RI, 2019.

\bibitem{1905.05545}
Hara Charalambous, Kostas Karagiannis, and Aristides Kontogeorgis.
\newblock The relative canonical ideal of the
  {A}rtin-{S}chreier-{K}ummer-{W}itt family of curves, 2019.

\bibitem{MR2103875}
David Eisenbud.
\newblock {\em The geometry of syzygies}, volume 229 of {\em Graduate Texts in
  Mathematics}.
\newblock Springer-Verlag, New York, 2005.
\newblock A second course in commutative algebra and algebraic geometry.

\bibitem{MR3729076}
Gavril Farkas.
\newblock Progress on syzygies of algebraic curves.
\newblock In {\em Moduli of curves}, volume~21 of {\em Lect. Notes Unione Mat.
  Ital.}, pages 107--138. Springer, Cham, 2017.

\bibitem{MR494707}
Shiro Goto and Keiichi Watanabe.
\newblock On graded rings. {I}.
\newblock {\em J. Math. Soc. Japan}, 30(2):179--213, 1978.

\bibitem{MR895152}
Mark Green and Robert Lazarsfeld.
\newblock A simple proof of {P}etri's theorem on canonical curves.
\newblock In {\em Geometry today ({R}ome, 1984)}, volume~60 of {\em Progr.
  Math.}, pages 129--142. Birkh\"{a}user Boston, Boston, MA, 1985.

\bibitem{Hartshorne:77}
Robin Hartshorne.
\newblock {\em Algebraic geometry}.
\newblock Springer-Verlag, New York, 1977.
\newblock Graduate Texts in Mathematics, No. 52.

\bibitem{MR3361016}
Bernhard K\"{o}ck and Joseph Tait.
\newblock Faithfulness of actions on {R}iemann-{R}och spaces.
\newblock {\em Canad. J. Math.}, 67(4):848--869, 2015.

\bibitem{Kontogeorgis2002-to}
Aristides Kontogeorgis.
\newblock Automorphisms of {F}ermat-like varieties.
\newblock {\em Manuscripta Math.}, 107(2):187--205, February 2002.

\bibitem{MR1643304}
Aristides~I. Kontogeorgis.
\newblock The group of automorphisms of the function fields of the curve
  {$x^n+y^m+1=0$}.
\newblock {\em J. Number Theory}, 72(1):110--136, 1998.

\bibitem{Leopoldt:96}
Heinrich-Wolfgang Leopoldt.
\newblock \"{U}ber die {A}utomorphismengruppe des {F}ermatk\"orpers.
\newblock {\em J. Number Theory}, 56(2):256--282, 1996.

\bibitem{MR2560561}
Irena Peeva.
\newblock {\em Graded syzygies}, volume~14 of {\em Algebra and Applications}.
\newblock Springer-Verlag London, Ltd., London, 2011.

\bibitem{Saint-Donat73}
B.~Saint-Donat.
\newblock On {P}etri's analysis of the linear system of quadrics through a
  canonical curve.
\newblock {\em Math. Ann.}, 206:157--175, 1973.

\bibitem{Towse96}
Christopher Towse.
\newblock Weierstrass points on cyclic covers of the projective line.
\newblock {\em Trans. Amer. Math. Soc.}, 348(8):3355--3378, 1996.

\bibitem{MR2690176}
Christopher~Wayne Towse.
\newblock {\em Weierstrass points on cyclic covers of the projective line}.
\newblock ProQuest LLC, Ann Arbor, MI, 1993.
\newblock Thesis (Ph.D.)--Brown University.

\bibitem{Tze:95}
Pavlos Tzermias.
\newblock The group of automorphisms of the {F}ermat curve.
\newblock {\em J. Number Theory}, 53(1):173--178, 1995.

\end{thebibliography}
\end{document}